\theoremstyle{plain}
\newtheorem{theorem}{Theorem}[section]
\newtheorem{proposition}[theorem]{Proposition} 
\newtheorem{corollary}[theorem]{Corollary}
\newtheorem{lemma}[theorem]{Lemma}
\newtheorem{problem}[theorem]{Problem}
\theoremstyle{definition}
\newtheorem{definition}[theorem]{Definition}
\newtheorem{example}[theorem]{Example}
\newtheorem{remark}[theorem]{Remark}
\theoremstyle{remark}
\newtheorem{claim}[theorem]{Claim}
\newcommand\Aut{\operatorname{Aut}}
\newcommand\id{\operatorname{id}}
\newcommand\Bir{\operatorname{Bir}}
\newcommand\Gal{\operatorname{Gal}}
\newcommand\ch{\operatorname{char}}
\begin{document}

\title[Simultaneous Galois points for a reducible plane curve]{Simultaneous Galois points for a reducible plane curve consisting of nonsingular components}

\author[A. Ikeda]{Aki Ikeda}
\address[A. Ikeda]{Department of Mathematics, Graduate School of Science and Technology, Niigata University, Niigata 950-2181, Japan.} 
\email{ikeda@m.sc.niigata-u.ac.jp}

\author[T. Takahashi]{Takeshi Takahashi}
\address[T. Takahashi]{Education Center for Engineering and Technology, Faculty of Engineering, Niigata University, Niigata 950-2181, Japan}
\email{takeshi@eng.niigata-u.ac.jp}

\subjclass{Primary  14H50; Secondary 14H05, 14H45.} 
\keywords{Galois Point, Reducible Plane Curve, Projection}

\thanks{We thank Prof.~Satoru Fukasawa and Prof.~Kei Miura for their helpful advice. We would like to thank Editage (\url{www.editage.com}) for English language editing. This work was supported by JSPS KAKENHI Grant Number JP19K03441 and JST SPRING Grant Number JPMJSP2121.}


\begin{abstract}
Yoshihara's definition of Galois points for irreducible plane curves is extended to reducible plane curves. We also define simultaneous Galois points, weakening the conditions of the definition. We studied the number of simultaneous Galois points for a reduced plane curve with nonsingular components.
\end{abstract}

\maketitle


\section{Introduction} \label{Section Introduction}

The Galois points for an irreducible plane curve are as follows: 
\begin{definition}[\cite{zbMATH01441808}, \cite{zbMATH01643659}]
	Let $C \subset \mathbb{P}^2$ be an irreducible plane curve of degree $d$ over algebraically closed field $k$. For a point $P \in \mathbb{P}^2$, let $\pi_P:C \dashrightarrow \mathbb{P}^1$ be the projection with the center $P$. The point $P$ is called a {\itshape Galois point} if the field extension $k(C)/\pi_P^*k(\mathbb{P})$ given by the projection $\pi_P$ is Galois. We denote by $\Delta_{\mathrm{in}}(C)$ (resp. $\Delta_{\mathrm{out}}(C)$) the set of all Galois points on $C$ (resp. not on $C$), and $\Delta(C)=\Delta_{\mathrm{in}}(C) \cup \Delta_{\mathrm{out}}(C)$.
\end{definition}

Numerous studies have been conducted on the Galois points. For instance, Galois points have been enumerated for non-singular plane curves (\cite{zbMATH01441808}, \cite{zbMATH01643659}, \cite{zbMATH05126212}, \cite{zbMATH05225736, zbMATH05202381, zbMATH05241281, https://doi.org/10.48550/arxiv.1011.3648, zbMATH05721541, zbMATH06199262, zbMATH06244917}). In \cite{https://doi.org/10.48550/arxiv.2210.02076}, Fukasawa demonstrated that, for a (possibly singular) curve, the number of outer Galois points, which refer to Galois points not on $C$, is at most three when $\mathrm{char}(k) = 0$ and $d \geq 3$. Yoshihara defined Galois points for an irreducible hypersurface in high-dimensional projective space (\cite{zbMATH01675821}, \cite{zbMATH01956748}). Furthermore, Yoshihara defined a Galois subspace for a projective variety in $\mathbb{P}^N$ and a Galois embedding for a nonsingular projective variety and investigated them (\cite{zbMATH05045279}, \cite{zbMATH05214476}, etc.). Fukasawa, Miura, and the second author have introduced quasi-Galois points, which are a generalization of Galois points for irreducible plane curves, and studied their number, distribution, and Galois groups (\cite{zbMATH07199975}, \cite{https://doi.org/10.48550/arxiv.2211.16033}). For further research trends related to Galois points, refer to the ``List of Problems'' authored by Yoshihara and Fukasawa (\cite{ListOfProblems}).

In this study, we defined Galois points and investigated the number of Galois points for reducible plane algebraic curves. Hereafter, let $k$ be an algebraically closed field with characteristic $0$. Let $C$ be a reduced projective curve in $\mathbb{P}^2$ over $k$. Let $C=C_1 \cup \dots \cup C_n$ be the irreducible decomposition of $C$ and $K_i$ be the function field of $C_i$. Let $R(C)$ be the function ring of $C$ (for the definition, see Section 7.1.3 in Chapitre Premier of \cite{MR217083} or Exercise~I-21 in \cite{zbMATH01260607}). Here, $R(C)$ is isomorphic with the direct product $K_1 \times \dots \times K_n$. For reducible plane curves, the function rings are considered alternatives to the function fields. We consider a projection $\pi_P: C \dashrightarrow \mathbb{P}^1$ with the center $P \in \mathbb{P}^2$. Using $\pi_P$, we obtain the field extensions $\pi_P^*: k(\mathbb{P}^1) \hookrightarrow K_i$ and the extension of $k$-algebras $\pi_P^*: k(\mathbb{P}^1) \hookrightarrow R(C)$. Let $K_P$ denote the image of $\pi_P^*: k(\mathbb{P}^1) \hookrightarrow R(C)$. 

The Galois extension for algebra over a field is as follows:

\begin{definition}[Chapter V in \cite{zbMATH01201676}]
Let $F$ be a field. A finite-dimensional $F$-algebra $L$ is called an {\itshape \'{e}tale} $F$-algebra if $L \simeq K_1 \times \cdots \times K_r$ (as $F$-algebra) for certain finite separable field extensions $K_1, \dots, K_r$ of $F$. The {\itshape \'{e}tale} $F$-algebra $L$ endowed with an action by a finite group $G$ of $F$-algebra automorphisms, is called a {\itshape $G$-algebra} over $F$. The $G$-algebra over $F$ is said to be {\itshape Galois} if $|G|=\dim_F L$ and $L^G =F$ hold, where $L^G = \{ x \in L \mid g(x)=x \text{ for all } g \in G \}$. 
\end{definition}

Following Yoshihara's definition of the Galois points, we define them for reduced plane curves as follows:

\begin{definition} \label{Def of G-Galois point}
	A point $P \in \mathbb{P}^2$ is called a {\itshape Galois point with a Galois group $G$} if the function ring $R(C)$ is a Galois $G$-algebra over $K_P$. 
\end{definition}

Based on the fact that every Galois $G$-algebra extension $L \simeq K_1 \times \cdots \times K_r$ satisfies that each $K_i/F$ is Galois and $K_i \simeq K_j$ ($\forall i, j$) as $F$-algebra (Proposition 18.18 in \cite{zbMATH01201676}), we provide a slightly weakened concept of the Galois point definition. 

\begin{definition} \label{Def of SG point}
	A point $P \in \mathbb{P}^2$ is said to be {\itshape simultaneous Galois} (``SG'' for short) if each field extension $k(C_i)/\pi^*_P k(\mathbb{P}^1)$ ($i=1, \dots, n$) is Galois and $k(C_i) \simeq k(C_j)$  ($\forall i,j$) as $k(\mathbb{P}^1)$-algebra. The SG point is considered {\itshape inner} (resp. {\itshape outer}) if $P \in \cap_{i=i}^{n} C_i$ (resp. $P \notin \cup_{i=i}^{n} C_i$). 
	We denote by $S\Delta(C_1, \dots, C_n)$ (resp. $S\Delta_{\mathrm{in}}(C_1, \dots, C_n)$, $S\Delta_{\mathrm{out}}(C_1, \dots, C_n)$) the set of all SG points (resp. inner SG points, outer SG points). 
\end{definition}

Although not discussed here, extensions of the definition ``SG point'' (for example, SG points for high-dimensional hypersurfaces,  SG subspaces, quasi-SG points, etc.) can also be considered. In this study, we examine the number of SG points in the simplest case. Our main results, as discussed in later sections, are as follows: 

\begin{theorem}
	Let $C = C_1 \cup C_2$ be a reduced curve in $\mathbb{P}^2$. 
	\begin{enumerate}
		\item (Theorem~\ref{Theorem SG points two quadrics}) Assume that $C_1$ and $C_2$ are nonsingular quadric curves. Then the number of outer SG points equals $0$, $1$, $3$, or $6$. 
		\item (Theorem~\ref{Theorem the number of outer SG points d>2}) Assume that $C_1$ and $C_2$ are nonsingular curves of same degree $d$, where $d \geq 3$. Then the number of outer SG points is at most one.  
		\item (Lemma~\ref{Lemma number of SG points}) Assume that $C_1$ and $C_2$ are nonsingular curves of same degree $d$, where $d \geq 5$. Then the number of inner SG points is at most one. 
		\item (Theorem~\ref{Theorem the number of inner SG points}) Assume that $C_1$ and $C_2$ are nonsingular quartic curves. Then the number of inner SG points is $0$, $1$, or $2$. 
		\item (Theorem~\ref{Theorem C has inner SG and outer SG})
			Assume that $C_1$ and $C_2$ are nonsingular curves of the same degree $d$, where $d \geq 4$. Then $S\Delta_{\mathrm{in}}(C_1,C_2)=\emptyset$ or $S\Delta_{\mathrm{out}}(C_1,C_2)=\emptyset$.  
	\end{enumerate}
\end{theorem}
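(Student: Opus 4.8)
The plan is to argue by contradiction. Suppose that there exist an inner SG point $P \in C_1 \cap C_2$ and an outer SG point $Q \notin C_1 \cup C_2$. Since $C_1, C_2$ are nonsingular of degree $d$, the projection from $P$ has degree $d-1$ and the projection from $Q$ has degree $d$ on each component; hence $P$ is an inner Galois point and $Q$ an outer Galois point of each $C_i$. By the structure of Galois points of a nonsingular plane curve (used already in the preceding results), the Galois group of $\pi_P|_{C_i}$ is generated by a homology $\sigma_i \in \mathrm{PGL}_3(k)$ of order $d-1$ with center $P$ preserving $C_i$, and that of $\pi_Q|_{C_i}$ by a homology $\tau_i$ of order $d$ with center $Q$ preserving $C_i$. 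In particular the tangent line $T_P C_i$ meets $C_i$ only at $P$, with multiplicity $d$.

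First I would show that the $k(\mathbb{P}^1)$-algebra isomorphisms $k(C_1) \simeq k(C_2)$ furnished by the two SG conditions are induced by linear transformations. Because $Q \notin C_i$, the pencil of lines through $Q$ has no base point on $C_i$, so $\pi_Q^{*}\mathcal{O}_{\mathbb{P}^1}(1) \simeq \mathcal{O}_{C_i}(1)$; thus any isomorphism $\phi \colon C_1 \to C_2$ with $\pi_Q \circ \phi = \pi_Q$ satisfies $\phi^{*}\mathcal{O}_{C_2}(1) \simeq \mathcal{O}_{C_1}(1)$ and is therefore the restriction of some $\tilde\phi \in \mathrm{PGL}_3(k)$ with $\tilde\phi(C_1)=C_2$, fixing $Q$ and every line through $Q$, i.e. a homology with center $Q$. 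Likewise, since $P \in C_i$ accounts for a single base point, $\pi_P^{*}\mathcal{O}_{\mathbb{P}^1}(1) \simeq \mathcal{O}_{C_i}(1)(-P)$, and the inner SG isomorphism is induced by a homology $\tilde\psi$ with center $P$ and $\tilde\psi(C_1)=C_2$.

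The heart of the argument is then a normal-form computation. Choosing coordinates with $Q=(0:0:1)$ and $C_1 \colon z^d + h_1(x,y)=0$, and using that $P$ is a point of maximal contact $d$ for both curves, I would bring the pair into the common shape $C_i \colon z^d + a_i x^{d-1} y + b_i y^d = 0$ with $P=(1:0:0)$ on the common axis $\{z=0\}$ and $\overline{PQ}=\{y=0\}=T_P C_i$; smoothness forces $a_i, b_i \neq 0$. Reading the outer SG condition as equality of the branch loci of $\pi_Q|_{C_1}$ and $\pi_Q|_{C_2}$ (the roots of $h_i=y(a_i x^{d-1}+b_i y^{d-1})$) gives $a_1 b_2 = a_2 b_1$, while the inner SG condition, as equality of the branch loci of $\pi_P|_{C_1}$ and $\pi_P|_{C_2}$ (the solutions of $(z/y)^d=-b_i$), gives $b_1=b_2$. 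Together these force $a_1=a_2$ and $b_1=b_2$, so $C_1=C_2$, contradicting that $C_1,C_2$ are distinct components of $C$.

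The main obstacle is the reduction to this common normal form. A priori the homology $\tilde\phi$ has an axis different from $\{z=0\}$, so $C_2=\tilde\phi(C_1)$ need not be in standard form and $P$ need not lie on a common axis; the same difficulty affects the inner homologies $\sigma_1,\sigma_2$. Resolving it amounts to excluding the ``skew'' configuration, which I expect to do by combining the two isomorphisms — note that $\tilde\psi^{-1}\tilde\phi$ is a linear automorphism of $C_1$ and so lies in the finite group $\Aut(C_1)$ — with the requirement that $\overline{PQ}$ be the common tangent $T_P C_1 = T_P C_2$, which places $P$ on both outer axes and collapses the configuration to the symmetric one above. The hypothesis $d \geq 4$ is used here: it makes the degree-$(d-1)$ inner projection a genuine Galois condition and guarantees that $\Aut(C_i)$ is finite, so that the relevant linear maps are well controlled.
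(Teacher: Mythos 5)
Your proposal addresses only assertion (5) of the theorem. The statement to be proved is a compilation of five results: the count $\{0,1,3,6\}$ of outer SG points for two quadrics (Theorem~\ref{Theorem SG points two quadrics}), the bound $\# S\Delta_{\mathrm{out}}(C_1,C_2)\leq 1$ for equal degree $d\geq 3$ (Theorem~\ref{Theorem the number of outer SG points d>2}), the bound $\# S\Delta_{\mathrm{in}}(C_1,C_2)\leq 1$ for $d\geq 5$ (Lemma~\ref{Lemma number of SG points}), the count $\{0,1,2\}$ of inner SG points for two quartics (Theorem~\ref{Theorem the number of inner SG points}), and the mutual exclusion of inner and outer SG points for $d\geq 4$ (Theorem~\ref{Theorem C has inner SG and outer SG}). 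Your contradiction argument with an inner point $P$ and an outer point $Q$ bears only on the last item. Nothing in the proposal touches the dual-curve bijection $S\Delta_{\mathrm{out}}(C_1,C_2)\to\mathcal{L}(\hat{C}_1,\hat{C}_2)$ used for the quadric case, the Fermat-curve rigidity computation behind assertion (2), the reduction of (3) to $S\Delta(C_1,C_2)\subset\Delta(C_1)$ together with Yoshihara's bound $\#\Delta_{\mathrm{in}}(C_1)\leq 1$, or the classification of quartic pairs realizing two inner SG points in (4). As a proof of the stated theorem, the proposal is therefore incomplete in an essential way, not merely in details.

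Even restricted to assertion (5), there is a genuine gap, and it is the one you flag yourself: the simultaneous reduction of $C_1$ and $C_2$ to the common normal form $z^d+a_ix^{d-1}y+b_iy^d=0$. Everything in your endgame (the outer branch loci giving $a_1b_2=a_2b_1$, the inner branch loci giving $b_1=b_2$, hence $C_1=C_2$) presupposes that the two inner homologies share the axis $\{x=0\}$, the two outer homologies share the axis $\{z=0\}$, and $T_PC_1=T_PC_2=\overline{PQ}$; excluding the ``skew'' configuration is precisely the hard content, and you only say you ``expect'' to do it via $\Aut(C_1)$. The paper sidesteps this entirely by quoting the classification in Theorem~\ref{Theorem Galois points}~(8)--(10): a nonsingular curve with both an inner and an outer Galois point is projectively equivalent to $XY^{d-1}+X^d+Z^d=0$, with the Galois points pinned at $(0:1:0)$ (up to the automorphisms in Eq.~(\ref{Equation automorphisms}) when $d=4$) and $(0:0:1)$; the maps $\phi_P,\phi_Q$ are then forced into triangular matrix form, and comparing $\phi_P^{-1}(C_1)$ with $\phi_Q^{-1}(C_1)$ yields $C_2=C_1$. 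Your branch-locus comparison is an attractive, more elementary substitute for that coefficient comparison, but to make it legitimate you must either prove the alignment facts directly (e.g., use uniqueness of the relevant Galois points to show $\tau_i(P)=P$, hence $P$ lies on the axis of the outer homology, and so on) --- which amounts to reproving Theorem~\ref{Theorem Galois points}~(8) --- or simply cite that classification as the paper does.
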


We begin with the preliminary results in Section 2. In Section 3, we analyze the outer SG points of a plane curve comprising two quadrics. In Section 4, we explore the outer SG points of a plane curve composed of two curves with degrees greater than two. Finally, in Section 5, we examine the inner SG points of a plane curve consisting of two quartic curves. In each section, we present the relevant theorem, proof, and examples.


\section{Preliminaries}

To understand the Galois points and SG points, we first present the preliminary results. 

We summarize the results for the Galois points of nonsingular plane curves. 
	
\begin{theorem}[\cite{zbMATH01441808}, \cite{zbMATH01643659}, \cite{https://doi.org/10.48550/arxiv.1011.3648}, \cite{zbMATH05721541}] \label{Theorem Galois points}
	Let $C \subset \mathbb{P}^2$ be a nonsingular projective curve of degree $d \geq 3$ over an algebraically closed field $k$ with characteristic $0$. 
	Then:
		\begin{enumerate}
		\item for a Galois point $P$, the Galois group of the extension $k(C)/\pi^*_P k(\mathbb{P}^1)$ is cyclic;
		\item $\# \Delta_{\mathrm{out}}(C) = 0,1$ or $3$; 
		\item $\# \Delta_{\mathrm{out}}(C) = 3$ if and only if $C$ is projectively equivalent to the Fermat curve $F_d: X^d + Y^d + Z^d = 0$, where $(X:Y:Z)$ is a system of homogeneous coordinates on $\mathbb{P}^2$; 
		\item for the Fermat curve $F_d: X^d + Y^d + Z^d = 0$, $\Delta_{\mathrm{out}}(C)=\{(0:0:1),(0:1:0),(1:0:0)\}$;
		\item if $d \geq 5$, then $\# \Delta_{\mathrm{in}}(C) = 0$ or $1$; 
		\item if $d =4$, then $\# \Delta_{\mathrm{in}}(C) = 0,1$ or $4$;
		\item $\# \Delta_{\mathrm{in}}(C) = 4$ if and only if $C$ is projectively equivalent to the curve $XY^3 + X^4 + Z^4 = 0$; 
		\item if $d \geq 4$, $\# \Delta_{\mathrm{in}}(C) \geq 1$ and $\# \Delta_{\mathrm{out}}(C) \geq 1$, then $C$ is projectively equivalent to the curve $XY^{d-1} + X^d + Z^d = 0$;
		\item for the curve $XY^3 + X^4 + Z^4 = 0$, $\Delta_{\mathrm{in}}(C) = \{(0:1:0),(-1:1:0),(-\omega:1:0), (-\omega^2:1:0)\}$ and $\Delta_{\mathrm{out}}(C)=\{(0:0:1)\}$, where $\omega$ is a primitive cubic root of unity; 
		\item for the curve $XY^{d-1} + X^d + Z^d = 0$ ($d \geq 5$), $\Delta_{\mathrm{in}}(C) =\{(0:1:0)\}$ and $\Delta_{\mathrm{out}}(C)=\{(0:0:1)\}$.
	\end{enumerate} 
\end{theorem}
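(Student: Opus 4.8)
The statement assembles several theorems of Yoshihara, Miura, Hasegawa and Fukasawa (the cited works), so the plan is to describe the single geometric mechanism that underlies all ten assertions rather than to reprove each in isolation. The central object is, for a Galois point $P$, the deck transformation group $G$ of the covering $\pi_P\colon C\dashrightarrow\mathbb{P}^1$, which is the Galois group of $k(C)/\pi_P^*k(\mathbb{P}^1)$. First I would show that every $g\in G$ extends to an element of $\mathrm{PGL}_3(k)$: for $d\geq 4$ this is the classical fact that $\Aut(C)\hookrightarrow\mathrm{PGL}_3(k)$ for a smooth plane curve, while $d=3$ needs a short separate argument using that $g$ preserves the pencil of lines through $P$. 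Since $g$ fixes $P$ and acts trivially on the base $\mathbb{P}^1$ of $\pi_P$, it stabilises every line of that pencil; choosing coordinates with $P=(1:0:0)$ forces $g$ to have the shape $(X:Y:Z)\mapsto(aX+bY+cZ:Y:Z)$. Thus $G$ embeds into the affine group $k^{\times}\ltimes k^2$, and because $\ch(k)=0$ the unipotent part $k^2$ has no nontrivial torsion, so $G$ injects into $k^{\times}$ and is therefore cyclic. This proves (1) and, after simultaneous diagonalisation, exhibits a generator $\sigma$ as the homology $(\zeta X:Y:Z)$ of order $e$, where $e=d$ when $P$ is outer and $e=d-1$ when $P$ is inner (the degree of $\pi_P$).

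Next I would translate ``$P$ is Galois'' into a normal form for the defining polynomial $F$. Writing $F\circ\sigma=\zeta^{m}F$ with $\zeta$ a primitive $e$-th root of unity, only the monomials $X^{j}H(Y,Z)$ with $j\equiv m\pmod e$ survive. For an outer point this gives $F=X^{d}+H_0(Y,Z)$ with $H_0$ a binary form of degree $d$, and smoothness of $C$ is equivalent to $H_0$ being separable. For an inner point, $P\in C$ kills the $X^{d}$-coefficient and one is left with $F=X^{d-1}H_1(Y,Z)+H_d(Y,Z)$, where $H_1$ is a nonzero linear form (cutting out the tangent line at $P$) and $H_d$ has degree $d$. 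These two normal forms are the engine for the rest: a second Galois point imposes a second homology with its own center and axis, and the requirement that $F$ simultaneously take both normal forms is extremely restrictive.

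From here the counting and classification assertions follow by exploiting these invariances. For (2)--(4), placing the centers of several outer Galois points at coordinate vertices and intersecting the induced conditions forces, in the presence of three points, $F=X^{d}+Y^{d}+Z^{d}$; conversely the three vertices are visibly Galois for the Fermat curve, and no fourth outer point can occur, yielding $\#\Delta_{\mathrm{out}}(C)\in\{0,1,3\}$ together with (3) and (4). The inner statements (5)--(7), (9) and (10) run parallel with $e=d-1$: for $d\geq 5$ the invariance conditions coming from two inner points are incompatible, forcing $\#\Delta_{\mathrm{in}}(C)\leq 1$, whereas for $d=4$ the exceptional value $e=3$ permits the quartic $XY^3+X^4+Z^4$, whose larger automorphism group supplies the four collinear inner Galois points of (7) and (9). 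Finally (8) is the simultaneous application of one inner and one outer normal form: demanding that $F$ be of both shapes pins it down to $XY^{d-1}+X^{d}+Z^{d}$, which realises exactly the inner and outer points listed in (9) and (10).

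The main obstacle is not the production of the normal forms but the \emph{sharp} upper bounds and the ``if and only if'' statements, namely the step of ruling out one Galois point beyond the maximum (a fourth outer point, a fifth inner point when $d\geq 5$, a further inner point for the exceptional quartic). Each such bound amounts to proving that a prescribed family of homologies, each given by its center and axis, cannot simultaneously stabilise a smooth degree-$d$ curve unless the configuration is the extremal one; I would carry this out by a direct analysis of the admissible invariant forms, supplemented where convenient by the fact that the subgroup of $\mathrm{PGL}_3(k)$ generated by the homologies must be finite and preserve $C$. The hypothesis $\ch(k)=0$ enters decisively twice: in the cyclicity of $G$ above, and in guaranteeing that $\pi_P$ is separable of degree $e$, so that ``Galois'' has its tame meaning and the extra Galois points produced by inseparable projections in positive characteristic do not arise.
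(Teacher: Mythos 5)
This theorem is not proved in the paper at all: it is a compilation of known results imported by citation (Miura--Yoshihara, Yoshihara, Fukasawa), and the bracketed references in the theorem header are the paper's entire ``proof.'' So there is no internal argument to compare yours against. Judged on its own terms, your outline correctly identifies the mechanism that drives all ten assertions in those sources: a deck transformation of $\pi_P$ preserves every fibre, hence every line through $P$, so once it is known to be linear it is a central collineation with centre $P$ of the form $(X:Y:Z)\mapsto(aX+bY+cZ:Y:Z)$; in characteristic $0$ a finite group of these injects into $k^{\times}$ and is cyclic, generated by a homology $\mathrm{diag}(\zeta,1,1)$ of order $d$ (outer) or $d-1$ (inner), and the invariant normal forms $X^{d}+H_{d}(Y,Z)$ and $X^{d-1}L(Y,Z)+H_{d}(Y,Z)$ follow. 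Your remark that linearity for $d=3$ requires a linear-system argument is exactly the point the paper isolates as Lemma~\ref{Lemma birat map extend to Proj trans}(2).

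As a proof, however, the proposal stops where the real work begins. Items (2), (3), (5), (6), (7) and (8) are sharp bounds and classifications, and for these you only assert that ``intersecting the induced conditions forces'' the Fermat form, that two inner normal forms are ``incompatible'' for $d\geq 5$, and that the exceptional quartic ``supplies'' the four inner points. None of this is carried out, and it is precisely where the cited papers spend their effort: two Galois points give two homologies whose centres and axes need not be in convenient relative position, so one must first control that configuration (e.g.\ show the centre of one lies on the axis of the other, or rule out collinearity) before the two normal forms can be compared in a common coordinate system, and the $d\geq 5$ inner bound additionally uses ramification/Riemann--Hurwitz-type information rather than a bare comparison of invariant monomials. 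You have written an accurate reading guide to the references, not a proof of the sharp statements; to make it a proof, those configuration and elimination arguments must be supplied, which is why the paper simply cites the literature here.
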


Here, we provide some elementary facts about the Galois and SG points. Let $C=\cup_{i=1}^n C_i \subset \mathbb{P}^2$ be a reduced curve and $P \in \mathbb{P}^2$ be a point. The projection $\pi_P: C \dashrightarrow \mathbb{P}^1$ with the center $P$ induces a monomorphism $\pi^*_P: k(\mathbb{P}^1) \hookrightarrow R(C)$, which is regarded as a $k(\mathbb{P}^1)$-algebra $R(C)$, where $R(C)$ is the function ring of $C$. Note that $R(C)= k(C_1) \times \dots \times k(C_n)$. We denote by $K_P$ the image of $\pi^*_P$. We denote by $\Bir(C)$ the group of birational maps from $C$ to $C$, and by $\Bir(\pi_P)$ the subgroup $\{ f \in \Bir(C) \mid \pi_P \circ f = \pi_P \}$. Let $G$ be a finite subgroup of $\Aut_{K_P}(R(C))$, where $\Aut_{K_P}(R(C))$ is the automorphism group of $K_P$-algebra $R(C)$. Group $G \subset \Aut_{K_P}(R(C))$ corresponds to a group $G^{\mathrm{op}} \subset \Bir(\pi_P)$. Alternatively, an element $g \in G$, which is an automorphism $g:R(C) \rightarrow R(C)$ over $K_P$, yields a birational map $f:C \dasharrow C$. This $g \mapsto f$ yields anti-isomorphism $G \rightarrow G^{\mathrm{op}}$. An element $f \in G^{\mathrm{op}}$ is expressed as a tuple $(\sigma_f; \varphi_{f,1}, \dots, \varphi_{f,n})$ where 
\begin{enumerate}
	\item $\sigma_f$ is a bijection $\{1,\dots, n \} \rightarrow \{1,\dots, n\}$; 
	\item $\varphi_{f, i}$ ($i=1, \dots, n$) is a birational map $C_i \dasharrow C_{\sigma_f(i)}$.  
\end{enumerate}
For $f,g \in G^{\mathrm{op}}$, we have that $\sigma_{f g} = \sigma_{f} \circ \sigma_{g}$ and $\varphi_{f g, i} = \varphi_{f, \sigma_{g}(i)} \circ \varphi_{g,i}$. The following lemma is given by the definition of the Galois point: 
\begin{lemma}
	A point $P$ is Galois with Galois group $G$ if and only if
\begin{enumerate}
	\item $\# G^{\mathrm{op}}= \sum_{i=1}^n \deg \pi_{P, i}$, where $\pi_{P, i}$ is the projection $\pi_P:C_i \dasharrow \mathbb{P}^1$; and
	\item $\{ z \in R(C) \mid f^*(z) = z \text{ for all } f \in G^{\mathrm{op}} \} = K_P$, where $f^*(z)$ means the pullback of $z$ by $f$. 
\end{enumerate} 
\end{lemma}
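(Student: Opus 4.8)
The plan is to unwind Definition~\ref{Def of G-Galois point} together with the definition of a Galois $G$-algebra, and to translate each of the two defining conditions, namely $|G| = \dim_{K_P} R(C)$ and $R(C)^G = K_P$, into the language of the anti-isomorphism $G \to G^{\mathrm{op}}$ set up just above the statement. Since $k$ has characteristic $0$, every finite extension is separable, so once each projection $\pi_{P,i}$ is non-constant (which is implicit in the appearance of $\deg \pi_{P,i}$), the $K_P$-algebra $R(C) = k(C_1) \times \cdots \times k(C_n)$ is automatically \'etale. Thus the only substantive content of ``Galois $G$-algebra over $K_P$'' is the two numerical and fixed-locus conditions, and the \'etale hypothesis requires no separate verification.

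First I would match the orders of the groups: the correspondence $g \mapsto f$ is a bijection (an anti-isomorphism), whence $|G| = \#G^{\mathrm{op}}$. Next I would compute $\dim_{K_P} R(C)$. Writing $p_i \colon R(C) \to k(C_i)$ for the $i$-th projection and $K_{P,i} = p_i(K_P) = \pi_{P,i}^* k(\mathbb{P}^1)$, the restriction $p_i|_{K_P} \colon K_P \to K_{P,i}$ is an isomorphism of fields, and as a $K_P$-module $R(C)$ decomposes as $\bigoplus_{i=1}^n k(C_i)$, with $K_P$ acting on the $i$-th summand through $p_i|_{K_P}$. Hence $\dim_{K_P} R(C) = \sum_{i=1}^n [k(C_i) : K_{P,i}] = \sum_{i=1}^n \deg \pi_{P,i}$, the final equality being the standard identification of the degree of a dominant map of curves with the degree of the induced function-field extension. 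Combining these, $|G| = \dim_{K_P} R(C)$ holds if and only if condition~(1) holds.

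For the fixed-locus condition I would invoke the way the anti-isomorphism was constructed: the automorphism $g \in G$ is precisely the pullback $f^*$ of the corresponding birational map $f \in G^{\mathrm{op}}$, the contravariance of pullback accounting for the passage from $G$ to $G^{\mathrm{op}}$. Consequently the two fixed sets coincide, $R(C)^G = \{ z \in R(C) \mid g(z)=z \text{ for all } g \in G \} = \{ z \in R(C) \mid f^*(z)=z \text{ for all } f \in G^{\mathrm{op}} \}$, so $R(C)^G = K_P$ is literally condition~(2). Putting the two equivalences together yields the lemma.

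I do not expect a genuine obstacle here: the statement is a dictionary lemma, and the work lies entirely in transporting the definition of a Galois $G$-algebra through the two bookkeeping identities $|G| = \#G^{\mathrm{op}}$ and $\dim_{K_P} R(C) = \sum_{i=1}^n \deg \pi_{P,i}$. The one point demanding care is the $K_P$-module description of $R(C)$, specifically that $K_P$ acts on each factor $k(C_i)$ through its own embedding $\pi_{P,i}^*$ rather than diagonally, since it is precisely this that makes the dimension split as the sum of the local degrees $\deg \pi_{P,i}$.
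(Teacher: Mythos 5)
The paper gives no proof of this lemma, stating only that it ``is given by the definition of the Galois point,'' and your write-up is exactly the intended unwinding: $|G|=\#G^{\mathrm{op}}$, $\dim_{K_P}R(C)=\sum_i\deg\pi_{P,i}$ via the componentwise action of $K_P$, identification of the fixed sets through $g=f^*$, and the automatic \'etaleness in characteristic $0$. Your proposal is correct and takes essentially the same (definitional) approach the paper intends.
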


We have the following algebraic facts about the Galois $G$-extension:
\begin{proposition}[Proposition~18.18 in \cite{zbMATH01201676}] \label{Proposition Galois G algebra}
Let $L \simeq K_1 \times \cdots \times K_r$ be a Galois $G$-algebra over $F$, where $K_i$ is a separable field extension of $F$. Then $K_i \simeq K_j$ as $F$-algebra for all $i,j$, and the field extensions $K_i/F$ are Galois.
\end{proposition}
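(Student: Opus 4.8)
The plan is to exploit the idempotent structure of $L = K_1 \times \cdots \times K_r$ together with the two defining conditions of a Galois $G$-algebra, namely $L^G = F$ and $|G| = \dim_F L$. Let $e_1, \dots, e_r \in L$ denote the primitive idempotents, so that $e_i L \simeq K_i$ and $e_1 + \cdots + e_r = 1$. Since every element of $G$ acts as an $F$-algebra automorphism of $L$, it must permute the set $\{e_1, \dots, e_r\}$ of primitive idempotents; this yields an action of $G$ on $\{1, \dots, r\}$.

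The first key step is to show that this action is transitive. Given an orbit $O \subseteq \{1, \dots, r\}$, the sum $\sum_{i \in O} e_i$ is a $G$-invariant idempotent, hence lies in $L^G = F$. As $F$ is a field, its only idempotents (under the diagonal embedding $F \hookrightarrow L$) are $0$ and $1 = \sum_{i=1}^r e_i$; since the orbit is nonempty the sum cannot be $0$, so $O = \{1, \dots, r\}$. Transitivity immediately yields part (1): for any $i, j$ choose $g \in G$ with $g(e_i) = e_j$; then $g$ restricts to an $F$-algebra isomorphism $e_i L \to e_j L$, i.e. $K_i \simeq K_j$ over $F$.

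For part (2) I would fix $i = 1$ and set $H = \operatorname{Stab}_G(e_1)$, which acts on $K_1 = e_1 L$ by $F$-algebra automorphisms. The goal is to prove $K_1^H = F$, after which Artin's theorem (a finite group of automorphisms of a field with fixed field $F$ produces a Galois extension with $[K_1 : F] = |H|$) finishes the argument; by part (1) every $K_i/F$ is then Galois as well. To compute the fixed field, choose coset representatives $g_1 = 1, g_2, \dots, g_r$ of $G/H$ with $g_t(e_1) = e_t$ (using transitivity and orbit--stabilizer). Given $x \in K_1^H$, form $y = \sum_{t=1}^r g_t(x)$; this is independent of the choice of representatives because $x$ is $H$-fixed, it is $G$-invariant because left multiplication by any $g \in G$ merely permutes the cosets, and hence $y \in L^G = F$. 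Reading off the $K_1$-component of $y$ recovers $x$ (all other summands $g_t(x) \in K_t$ vanish there), so $x$ lies in the image of $F$; thus $K_1^H = F$.

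The main obstacle is the fixed-field computation in this last step: one must set up the coset averaging carefully, verifying simultaneously that the averaged element is well defined, that it is genuinely $G$-invariant (so that it falls into $L^G = F$), and that extracting the correct coordinate returns the original element --- all while keeping track of the diagonal copy of $F$ inside $L$ versus its image inside each factor $K_i$. The transitivity step is conceptually the heart of the matter but is short once one observes that a field has no nontrivial idempotents; the dimension hypothesis $|G| = \dim_F L$ is then needed only (if at all) to cross-check $[K_1 : F] = |H|$, which Artin's theorem in fact supplies directly.
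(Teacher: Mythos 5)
Your argument is correct and complete. Note, however, that the paper does not prove this statement at all: it is quoted verbatim as Proposition~18.18 of Knus--Merkurjev--Rost--Tignol, \emph{The Book of Involutions}, so there is no in-paper proof to compare against. What you have written is essentially the standard argument (and the one underlying the cited reference): $G$ permutes the primitive idempotents; the condition $L^G=F$ forces transitivity because a $G$-invariant idempotent must be $0$ or $1$; transitivity gives $K_i\simeq K_j$ over $F$; and the coset-averaging computation shows $K_1^H=F$ for $H=\operatorname{Stab}_G(e_1)$, whence $K_1/F$ is Galois by Artin's theorem. All the delicate points you flag are handled correctly --- well-definedness of $\sum_t g_t(x)$ from $H$-invariance of $x$, $G$-invariance from the permutation of cosets, and the identification $g_t(x)=e_tg_t(x)\in K_t$ so that the $K_1$-component of the average is $x$ itself while the average lies in $F\cdot 1_L$. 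Your closing observation that the hypothesis $\lvert G\rvert=\dim_FL$ is not actually used for this conclusion (only $L^G=F$ is) is also accurate; that hypothesis is equivalent, given $L^G=F$, to faithfulness of the stabilizer's action on a factor, and plays no role here.
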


The following lemma follows immediately from the above proposition.

\begin{lemma} \label{Lemma G-Galois is SG}
	Let $C \subset \mathbb{P}^2$ be a reduced plane curve. We assume that the point $P \in \mathbb{P}^2$ is a Galois point for $C$. Then $P$ is an SG point. 
\end{lemma}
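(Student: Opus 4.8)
The plan is to read off the lemma directly from Proposition~\ref{Proposition Galois G algebra}; the definitions have been arranged so that ``$P$ is a Galois point'' translates into the hypotheses of that proposition and ``$P$ is an SG point'' into its conclusion. First I would unwind Definition~\ref{Def of G-Galois point}: to say that $P$ is a Galois point with group $G$ is precisely to say that the function ring $R(C)$ is a Galois $G$-algebra over the subfield $K_P = \pi_P^* k(\mathbb{P}^1)$ of $R(C)$.

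Next I would match this data with the setting of Proposition~\ref{Proposition Galois G algebra}, taking $F = K_P$ and $L = R(C)$. Being a Galois $G$-algebra requires, in particular, that $R(C)$ be an \'etale $K_P$-algebra, that is, a product of finite separable field extensions of $K_P$. Since $R(C) = k(C_1) \times \cdots \times k(C_n)$ is already a product of fields, its field factors are exactly the $k(C_i)$, each of which is finite and separable over $K_P$ (separability is in any case automatic because $\ch k = 0$). Thus the proposition applies with $K_i = k(C_i)$.

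The conclusion of Proposition~\ref{Proposition Galois G algebra} then delivers exactly the two conditions of Definition~\ref{Def of SG point}: every extension $k(C_i)/K_P = k(C_i)/\pi_P^* k(\mathbb{P}^1)$ is Galois, and $k(C_i) \simeq k(C_j)$ as $K_P$-algebra---equivalently as $k(\mathbb{P}^1)$-algebra through the isomorphism $\pi_P^*$---for all $i,j$. Hence $P$ is an SG point.

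I do not expect a substantive obstacle: the lemma is essentially a dictionary entry translating the \'etale-algebra language of Definition~\ref{Def of G-Galois point} into the component-wise language of Definition~\ref{Def of SG point}. The only point deserving care is that the field factors of the \'etale algebra $R(C)$ coincide with the function fields $k(C_i)$, which holds because the decomposition of an \'etale algebra into a product of fields is intrinsic---it is recorded by the primitive idempotents---and $R(C) = \prod_{i=1}^{n} k(C_i)$ is manifestly such a decomposition.
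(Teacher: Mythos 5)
Your proposal is correct and follows exactly the paper's route: the paper gives no separate argument, stating only that the lemma ``follows immediately'' from Proposition~\ref{Proposition Galois G algebra}, which is precisely the application you spell out with $F = K_P$ and $L = R(C) = k(C_1) \times \cdots \times k(C_n)$. Your added remarks (separability from $\ch k = 0$, intrinsicness of the field factors, and transporting the $K_P$-algebra isomorphisms to $k(\mathbb{P}^1)$-algebra isomorphisms via $\pi_P^*$) just make explicit what the paper leaves implicit.
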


Let us consider the converse of Lemma~\ref{Lemma G-Galois is SG}. 
\begin{remark}[cf. Example 18.16 in \cite{zbMATH01201676}] \label{Remark direct product of cyclic extensions}
	Let $F$ and $K$ be fields. We assume that $K$ is a Galois extension over $F$ of degree $d$. Let $L$ be the algebra $K \times \dots \times K$ (the direct product of $n$ fields). Then $L$ is regarded as $F$-algebra with $t \cdot (k_1, \dots, k_n) = (tk_1, \dots, tk_n )$ where $t \in F$ and $(k_1, \dots, k_n) \in L$, and $\dim_F L = dn$. 	
	\begin{enumerate}
		\item Assume that $\Gal (K/F)$ is cyclic. Let $\alpha$ be a generator of $\Gal (K/F)$. Let $\sigma$ be the automorphism of $L$ over $F$ defined by
			\[ \sigma ((k_1, \dots, k_n)) = (\alpha(k_n), k_1, \dots, k_{n-1}), \]
			and $G$ be the group generated by $\sigma$. Then $G$ is a cyclic group of order $dn$ and $L$ is a Galois $G$-algebra over $F$. 
		\item  Let $\sigma_{\alpha}$ ($\alpha \in \Gal(K/F)$) and $\tau$ be the automorphisms of $L$ over $F$ defined by
			\[ 	\sigma_{\alpha} ((k_1, \dots, k_n)) = (\alpha(k_1), \dots, \alpha(k_n)) \text{ and }
				\tau((k_1, \dots, k_n)) = (k_n, k_1, \dots, k_{n-1}), 
			\]
			and $G$ be the group generated by $\sigma_{\alpha}$ ($\alpha \in \Gal(K/F)$) and $\tau$. Then $G$ is isomorphic to $\Gal(K/F) \times \mathbb{Z}/n\mathbb{Z}$ and $L$ is a Galois $G$-algebra over $F$. 
	\end{enumerate}
\end{remark}

The following lemma follows from Proposition~\ref{Proposition Galois G algebra} and Remark~\ref{Remark direct product of cyclic extensions}.
\begin{lemma} \label{Lemma SG point -> G-Galois point}
	Let $C=\cup_{i=1}^n C_i \subset \mathbb{P}^2$ be a reduced plane curve and $P \in \mathbb{P}^2$ be a point. We assume that $P$ is an SG point. Let $H$ be a group isomorphic to the Galois group, given by each $\pi_P : C_i \dashrightarrow \mathbb{P}^1$. Then $P$ can be considered a Galois point with Galois group $G \simeq H \times\mathbb{Z}/n \mathbb{Z}$. If $H$ is cyclic, then $P$ can also be considered a Galois point with Galois group $G \simeq \mathbb{Z}/dn \mathbb{Z}$.
\end{lemma}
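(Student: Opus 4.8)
The plan is to reduce the statement directly to Remark~\ref{Remark direct product of cyclic extensions} by transporting its $F$-algebra structure along a suitable isomorphism. Write $F = K_P = \pi_P^* k(\mathbb{P}^1)$. Because $P$ is an SG point, each extension $k(C_i)/F$ is Galois with Galois group isomorphic to $H$, and the fields $k(C_i)$ are mutually isomorphic as $F$-algebras. I would fix $K := k(C_1)$, so that $\Gal(K/F) \simeq H$ and $[K:F] = |H| = d$, and for each $i$ choose an $F$-algebra isomorphism $\phi_i : k(C_i) \to K$ (with $\phi_1 = \id$); the existence of these $\phi_i$ is precisely the second condition in Definition~\ref{Def of SG point}.

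Next I would assemble these into a single $F$-algebra isomorphism
\[
\Phi = \phi_1 \times \cdots \times \phi_n : R(C) = k(C_1) \times \cdots \times k(C_n) \xrightarrow{\ \sim\ } K \times \cdots \times K = L,
\]
where $L$ is the $F$-algebra of Remark~\ref{Remark direct product of cyclic extensions}. Since $\ch k = 0$ each factor is separable over $F$, so $R(C)$, and hence $L$, is an étale $F$-algebra. Remark~\ref{Remark direct product of cyclic extensions}(2) provides a group $G$ of $F$-algebra automorphisms of $L$ with $G \simeq \Gal(K/F) \times \mathbb{Z}/n\mathbb{Z} \simeq H \times \mathbb{Z}/n\mathbb{Z}$ under which $L$ is a Galois $G$-algebra over $F$. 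Conjugating this action by $\Phi$, that is, letting $g \in G$ act on $R(C)$ as $\Phi^{-1} \circ g \circ \Phi$, equips $R(C)$ with a $G$-action by $K_P$-algebra automorphisms.

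I would then verify that the transported structure is again Galois. An $F$-algebra isomorphism preserves $\dim_F$, so $|G| = \dim_F L = \dim_F R(C)$; moreover $\Phi$ is $F$-linear and hence fixes the base, so it carries the fixed subalgebra $L^G = F$ onto $R(C)^G = \Phi^{-1}(F) = K_P$. By Definition~\ref{Def of G-Galois point} this says exactly that $P$ is a Galois point with Galois group $G \simeq H \times \mathbb{Z}/n\mathbb{Z}$. For the last assertion, if $H$ is cyclic then $K/F$ is a cyclic extension, and I would repeat the argument using Remark~\ref{Remark direct product of cyclic extensions}(1) in place of (2): this yields a cyclic group $G \simeq \mathbb{Z}/dn\mathbb{Z}$ acting on $L$, whence on $R(C)$, making $R(C)$ a Galois $\mathbb{Z}/dn\mathbb{Z}$-algebra over $K_P$.

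The argument is essentially formal, so I do not expect a serious obstacle. The only point requiring care is the observation that ``being a Galois $G$-algebra'' is transported by an $F$-algebra isomorphism together with the corresponding conjugation of the $G$-action; this reduces to noting that the two defining conditions $|G| = \dim_F L$ and $L^G = F$ are both manifestly invariant under such a transport, after which the lemma follows immediately from the Remark.
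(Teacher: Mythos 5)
Your proposal is correct and follows exactly the route the paper intends: the paper offers no written proof beyond asserting that the lemma ``follows from Proposition~\ref{Proposition Galois G algebra} and Remark~\ref{Remark direct product of cyclic extensions}'', and your argument simply makes that explicit by using the SG condition to identify $R(C)$ with $K \times \cdots \times K$ and transporting the Galois $G$-algebra structure of the Remark along that isomorphism. The transport step you single out (invariance of $|G| = \dim_F L$ and $L^G = F$ under an $F$-algebra isomorphism with conjugated action) is exactly the right thing to check, so nothing is missing.
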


By Lemma~\ref{Lemma SG point -> G-Galois point}, all SG points in the examples in Sections 3, 4, and 5 can be regarded as Galois points. The following four lemmas are provided by the definition of SG point.

\begin{lemma} \label{Lemma Geometric property of SG point}
	Let $C=\cup_{i=1}^n C_n \subset \mathbb{P}^2$ be a reduced plane curve. A point $P \in \mathbb{P}^2$ is an SG point iff	\begin{enumerate}
		\item $P$ is a Galois point for each $C_i$ ($i = 1, \dots, n$), and
		\item there exist birational maps $\phi_{i,j} : C_i \dashrightarrow C_j$ for every $1 \leq i < j \leq n$ such that $\pi_P \circ \phi_{i,j} = \pi_P$. 
	\end{enumerate}  
\end{lemma}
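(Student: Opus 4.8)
The plan is to prove Lemma~\ref{Lemma Geometric property of SG point} by directly unwinding Definition~\ref{Def of SG point} of an SG point and translating the two field-theoretic conditions ``each $k(C_i)/\pi_P^*k(\mathbb{P}^1)$ is Galois'' and ``$k(C_i) \simeq k(C_j)$ as $k(\mathbb{P}^1)$-algebra'' into the geometric language of Galois points and birational maps. The first condition is essentially tautological: by the definition of a Galois point for an irreducible curve (given in the Introduction), the extension $k(C_i)/\pi_{P,i}^*k(\mathbb{P}^1)$ is Galois precisely when $P$ is a Galois point for the component $C_i$. So condition (1) of the lemma is literally a restatement of the first half of Definition~\ref{Def of SG point}, and I would simply remark this.

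The substance lies in showing that the second half of Definition~\ref{Def of SG point}, the existence of $k(\mathbb{P}^1)$-algebra isomorphisms $k(C_i) \simeq k(C_j)$, is equivalent to condition (2), the existence of birational maps $\phi_{i,j}: C_i \dashrightarrow C_j$ with $\pi_P \circ \phi_{i,j} = \pi_P$. First I would fix notation: each $\pi_{P,i}$ gives an embedding $\pi_{P,i}^*: k(\mathbb{P}^1) \hookrightarrow k(C_i)$, and its image is $K_{P,i}$; all these images are canonically identified with $K_P$ inside $R(C) = k(C_1) \times \cdots \times k(C_n)$ via the single map $\pi_P^*$. The key point is the standard anti-equivalence between dominant rational maps of irreducible curves and field embeddings fixing the base: a birational map $\phi_{i,j}: C_i \dashrightarrow C_j$ corresponds contravariantly to a field isomorphism $\phi_{i,j}^*: k(C_j) \xrightarrow{\sim} k(C_i)$, and the compatibility $\pi_P \circ \phi_{i,j} = \pi_P$ is the assertion that $\phi_{i,j}^*$ restricts to the identity on the copies of $k(\mathbb{P}^1)$, i.e.\ that $\phi_{i,j}^*$ is a $k(\mathbb{P}^1)$-algebra isomorphism. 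This is exactly the content of the tuple description $(\sigma_f; \varphi_{f,1}, \dots, \varphi_{f,n})$ already set up in the excerpt for elements of $G^{\mathrm{op}} \subset \Bir(\pi_P)$.

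The proof is therefore a matter of assembling this dictionary in both directions. For the forward implication, assuming $P$ is SG, condition (1) holds by definition, and each $k(\mathbb{P}^1)$-algebra isomorphism $k(C_i) \simeq k(C_j)$ produces, by the anti-equivalence, a birational map $\phi_{i,j}: C_i \dashrightarrow C_j$; the fact that the isomorphism is over $k(\mathbb{P}^1)$ translates precisely into $\pi_P \circ \phi_{i,j} = \pi_P$. For the converse, given (1) and (2), condition (1) supplies the Galois hypothesis on each component, and each $\phi_{i,j}$ with $\pi_P \circ \phi_{i,j} = \pi_P$ yields by pullback a field isomorphism $\phi_{i,j}^*: k(C_j) \xrightarrow{\sim} k(C_i)$ fixing $\pi_P^* k(\mathbb{P}^1)$, hence a $k(\mathbb{P}^1)$-algebra isomorphism, giving $k(C_i) \simeq k(C_j)$.

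I do not expect a genuine obstacle here; this lemma is a reformulation rather than a theorem with hidden difficulty. The only point requiring care is making the identifications of the various copies of $k(\mathbb{P}^1)$ precise, so that ``$k(\mathbb{P}^1)$-algebra isomorphism'' and ``commutes with projection'' refer to the \emph{same} base field via the single map $\pi_P^*$ rather than a priori distinct embeddings $\pi_{P,i}^*$. Once the base-field identification is pinned down, the equivalence follows immediately from the anti-equivalence between irreducible curves with dominant rational maps and their function fields with $k$-embeddings, which in characteristic $0$ is standard. I would state this anti-equivalence explicitly and then read off both implications in a few lines.
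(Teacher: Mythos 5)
Your proposal is correct and matches the paper's intent: the paper states this lemma without proof, noting only that it ``is provided by the definition of SG point,'' and your unwinding of Definition~\ref{Def of SG point} via the standard anti-equivalence between dominant rational maps of curves and function-field embeddings over $\pi_P^*k(\mathbb{P}^1)$ is exactly the implicit argument. No gap.
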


\begin{lemma} \label{Lemma SG points for n or 2 components}
		Let $C=\cup_{i=1}^n C_n \subset \mathbb{P}^2$ be a reduced plane curve. A point $P \in \mathbb{P}^2$ is an SG point for $C$ iff $P$ is an SG point for $C_i \cup C_j$ for any $i,j \in \{1, \dots, n\}$ with $i \ne j$.  
\end{lemma}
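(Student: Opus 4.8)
The plan is to observe that \emph{both} conditions defining an SG point decompose as a conjunction of requirements indexed by the individual components and by the unordered pairs of components, so that the assertion for $C$ is \emph{literally} the conjunction of the assertions for the pairs $C_i \cup C_j$. I would run the argument through the criterion of Lemma~\ref{Lemma Geometric property of SG point}, which says that $P$ is an SG point for a reduced curve exactly when $P$ is a Galois point for each of its components and, for every pair of components, there is a birational map between them commuting with $\pi_P$. The key structural point is that neither requirement couples three or more components together, so there is nothing genuinely global to check.

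For the forward implication I would simply restrict. Assuming $P$ is SG for $C=\cup_{i=1}^n C_i$, Lemma~\ref{Lemma Geometric property of SG point} gives that $P$ is a Galois point for every $C_\ell$ and that a compatible birational map $\phi_{i,j}\colon C_i \dashrightarrow C_j$ with $\pi_P \circ \phi_{i,j} = \pi_P$ exists for each pair. Fixing $i \ne j$, the Galois property for $\ell=i$ and $\ell=j$ together with the single map $\phi_{i,j}$ verifies the two conditions of Lemma~\ref{Lemma Geometric property of SG point} for the two-component curve $C_i \cup C_j$. Here one checks that the $k(\mathbb{P}^1)$-algebra structure $\pi_P$ puts on $k(C_i)\times k(C_j)$ is just the restriction of the one it puts on $R(C)$, so that ``Galois'' relative to $K_P$ means the same thing whether computed on the pair or on all of $C$. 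Hence $P$ is SG for $C_i \cup C_j$.

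For the converse I would assume $P$ is SG for every $C_i \cup C_j$ and reassemble. Each component $C_i$ occurs in at least one pair (this is where I use $n \ge 2$), and SG-ness of that pair forces $k(C_i)/K_P$ to be Galois; thus $P$ is a Galois point for every component. Moreover, for each unordered pair $\{i,j\}$, SG-ness of $C_i \cup C_j$ supplies the required map $\phi_{i,j}$. Since the relation is symmetric, the indices $i<j$ exhaust all pairs and no transitivity across three components is needed---each pairwise condition is furnished directly. Both hypotheses of Lemma~\ref{Lemma Geometric property of SG point} are therefore met, so $P$ is SG for $C$.

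I do not expect a serious obstacle; the content is entirely the bookkeeping of which conditions are ``pairwise local.'' The one place demanding care is the degenerate case $n=1$, where the right-hand side is vacuously true while the left-hand side is not, so the statement is to be read with $n \ge 2$. The one verification worth stating explicitly is that the base $K_P = \pi_P^* k(\mathbb{P}^1)$ and the induced $k(\mathbb{P}^1)$-algebra structures agree on each pair and on the whole curve, which is immediate because all of them are induced by the same projection $\pi_P$.
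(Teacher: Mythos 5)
Your argument is correct and matches the paper's intent: the paper gives no explicit proof, stating only that this lemma (like its neighbours) follows directly from Definition~\ref{Def of SG point}, whose two conditions are exactly the per-component and per-pair requirements you isolate. Routing the bookkeeping through the equivalent criterion of Lemma~\ref{Lemma Geometric property of SG point} is an immaterial variation, and your remark that the statement presupposes $n \ge 2$ is a fair, if minor, observation.
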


\begin{lemma} \label{Lemma number of SG points}
	Let $C=\cup_{i=1}^n C_n \subset \mathbb{P}^2$ be a reduced plane curve. Then 
	\[ \Delta(C_1) \supset S\Delta(C_1, C_2) \supset \dots \supset  S\Delta(C_1, \dots, C_n). \]
	Assume that the component $C_1$ is a nonsingular curve of degree $d_1$. If $d_1 \geq 3$, then $\# S\Delta_{\mathrm{out}}(C_1, \dots, C_n) \leq 3$. If $d_1 = 4$, then $\# S\Delta_{\mathrm{in}}(C_1, \dots, C_n) \leq 4$. If $d_1 \geq 5$, then $\# S\Delta_{\mathrm{in}}(C_1, \dots, C_n) \leq 1$.
\end{lemma}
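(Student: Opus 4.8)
The plan is to derive the entire statement from the definition of an SG point (Definition~\ref{Def of SG point}) together with the known enumeration of Galois points of a single nonsingular plane curve (Theorem~\ref{Theorem Galois points}). The argument is essentially a matter of unwinding definitions; the only point requiring genuine care is to check that the inner/outer dichotomy for SG points of the reducible curve $C$ restricts correctly to the inner/outer dichotomy for Galois points of the single component $C_1$.

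First I would establish the chain of inclusions. Fix a point $P$ and recall from Definition~\ref{Def of SG point} that $P \in S\Delta(C_1,\dots,C_m)$ precisely when each extension $k(C_i)/\pi^*_P k(\mathbb{P}^1)$ is Galois for $i = 1,\dots,m$ and $k(C_i) \simeq k(C_j)$ as $k(\mathbb{P}^1)$-algebra for all $i,j \le m$. Since these conditions imposed for indices up to $m$ immediately entail the corresponding conditions for indices up to $m-1$, we get $S\Delta(C_1,\dots,C_m) \subset S\Delta(C_1,\dots,C_{m-1})$, and iterating yields the whole chain. For the leftmost inclusion, the SG condition for $C_1 \cup C_2$ already forces $k(C_1)/\pi^*_P k(\mathbb{P}^1)$ to be Galois, which is exactly the statement that $P$ is a Galois point of the irreducible curve $C_1$, i.e. $P \in \Delta(C_1)$; alternatively this is immediate from Lemma~\ref{Lemma SG points for n or 2 components}.

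Next I would track the position of $P$ relative to the components. If $P$ is an outer SG point then $P \notin \bigcup_i C_i$, so in particular $P \notin C_1$; combined with $P \in \Delta(C_1)$ this gives $P \in \Delta_{\mathrm{out}}(C_1)$, whence $S\Delta_{\mathrm{out}}(C_1,\dots,C_n) \subset \Delta_{\mathrm{out}}(C_1)$. Dually, if $P$ is an inner SG point then $P \in \bigcap_i C_i \subset C_1$, so $P \in \Delta_{\mathrm{in}}(C_1)$ and $S\Delta_{\mathrm{in}}(C_1,\dots,C_n) \subset \Delta_{\mathrm{in}}(C_1)$. This compatibility of the two dichotomies is the crux of the numerical part of the lemma, and it is where I would be most careful.

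Finally I would read off the bounds. Since $C_1$ is nonsingular of degree $d_1 \ge 3$, Theorem~\ref{Theorem Galois points} applies: part~(2) gives $\#\Delta_{\mathrm{out}}(C_1) \in \{0,1,3\}$, part~(6) gives $\#\Delta_{\mathrm{in}}(C_1) \in \{0,1,4\}$ when $d_1 = 4$, and part~(5) gives $\#\Delta_{\mathrm{in}}(C_1) \in \{0,1\}$ when $d_1 \ge 5$. The inclusions of the previous paragraph then yield $\#S\Delta_{\mathrm{out}}(C_1,\dots,C_n) \le 3$ for $d_1 \ge 3$, $\#S\Delta_{\mathrm{in}}(C_1,\dots,C_n) \le 4$ for $d_1 = 4$, and $\#S\Delta_{\mathrm{in}}(C_1,\dots,C_n) \le 1$ for $d_1 \ge 5$, completing the proof. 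The only substantive input beyond bookkeeping is Theorem~\ref{Theorem Galois points}; there is no real obstacle here, so the main task is simply to present the reduction cleanly and to verify that ``outer/inner for $C$'' restricts to ``outer/inner for $C_1$'' without slippage.
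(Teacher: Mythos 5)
Your proposal is correct and is essentially the argument the paper intends: the paper gives no written proof, stating only that this lemma ``is provided by the definition of SG point,'' and your unwinding of Definition~\ref{Def of SG point} into the chain of inclusions, the compatibility of the inner/outer dichotomies (which is exactly Lemma~\ref{Lemma set of inner/outer SG points} restricted to $C_1$), and the counts from Theorem~\ref{Theorem Galois points}~(2), (5), (6) is precisely that intended argument.
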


\begin{lemma}\label{Lemma set of inner/outer SG points} 
	Let $C=\cup_{i=1}^n C_n \subset \mathbb{P}^2$ be a reduced plane curve. 
	Assume that every $C_i$ is nonsingular. Then 
	\[S\Delta(C_1, \dots, C_n) = S\Delta_{\mathrm{in}}(C_1, \dots, C_n) \cup S\Delta_{\mathrm{out}}(C_1, \dots, C_n),\] 
	\[S\Delta_{\mathrm{in}}(C_1, \dots, C_n) \subset \cap_{i=1}^n \Delta_{\mathrm{in}}(C_i), \text{ and}\] 
	\[S\Delta_{\mathrm{out}}(C_1, \dots, C_n) \subset \cap_{i=1}^n \Delta_{\mathrm{out}}(C_i).\] 
\end{lemma}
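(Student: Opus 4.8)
The plan is to read off the two displayed containments directly from the definitions and to reserve the real work for the set equality. For the containment $S\Delta_{\mathrm{in}}(C_1,\dots,C_n)\subset\cap_i\Delta_{\mathrm{in}}(C_i)$, take an inner SG point $P$: by definition $P\in\cap_i C_i$, and since $P$ is in particular an SG point, Lemma~\ref{Lemma Geometric property of SG point}(1) makes $P$ a Galois point for every $C_i$. A Galois point lying on its own curve is an inner Galois point, so $P\in\Delta_{\mathrm{in}}(C_i)$ for all $i$. The containment for outer SG points is the verbatim argument with $P\notin\cup_i C_i$, yielding outer Galois points for each component.

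For the equality $S\Delta=S\Delta_{\mathrm{in}}\cup S\Delta_{\mathrm{out}}$, the inclusion $\supseteq$ is immediate since inner and outer SG points are by definition SG points. The content is the reverse inclusion, namely that an SG point cannot lie on some components while missing others; I would prove it by contradiction. Suppose $P$ is an SG point with $P\in C_i$ and $P\notin C_j$ for some $i\neq j$. By Lemma~\ref{Lemma Geometric property of SG point}(2) there is a birational map $\phi_{i,j}:C_i\dashrightarrow C_j$ with $\pi_{P,j}\circ\phi_{i,j}=\pi_{P,i}$. Since $\phi_{i,j}$ is birational, $\deg\pi_{P,i}=\deg\pi_{P,j}$. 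But because the components are nonsingular, the projection degree equals $\deg C_i-1$ when the center lies on the curve and $\deg C_j$ when it does not, so $\deg C_i-1=\deg C_j$. When the components share a common degree this is already absurd, and the equality follows.

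For components of differing degrees I would push the same idea one step further: a birational map between smooth projective curves is an isomorphism, so $\phi_{i,j}$ is an isomorphism and $g(C_i)=g(C_j)$. Combining the genus $\binom{d-1}{2}$ of a nonsingular plane curve of degree $d$ with $\deg C_i=\deg C_j+1$ determines the degrees outright. I expect this to be the main obstacle, because $\binom{d-1}{2}$ fails to be strictly increasing at $d=1,2$---a line and a conic are both rational---so the genus equality does not contradict $\deg C_i=\deg C_j+1$ but instead leaves exactly the configuration $\deg C_j=1$, $\deg C_i=2$. That degenerate line-plus-conic pair (with $P$ on the conic and off the line) is an SG point that is neither inner nor outer, so to finish I would either invoke the standing hypothesis that the components have a common degree $d\geq 2$, under which the degree count of the previous paragraph already closes the argument, or else exclude the line--conic pair explicitly.
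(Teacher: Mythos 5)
Your proposal is correct, and in fact more careful than the paper, which offers no proof at all: this lemma is one of the four stated to be ``provided by the definition of SG point.'' The two containments are indeed immediate, and your degree count for the equality is the intended mechanism: $k(C_i)\simeq k(C_j)$ over $\pi_P^*k(\mathbb{P}^1)$ forces $\deg\pi_{P,i}=\deg\pi_{P,j}$, while nonsingularity gives $\deg\pi_{P,i}=\deg C_i - m_P(C_i)$ with $m_P(C_i)\in\{0,1\}$, so a point lying on some components but not others is impossible once the degrees agree. Your further observation is a genuine catch rather than a defect of your argument: as literally stated (arbitrary nonsingular components, no degree hypothesis), the first displayed equality fails for a conic $C_i$ and a line $C_j$ with $P\in C_i\setminus C_j$, since both projections are then degree-one and trivially Galois with $k(C_i)\simeq k(C_j)\simeq k(\mathbb{P}^1)$, making $P$ an SG point that is neither inner nor outer; your genus computation correctly isolates this as the unique exception. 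Every application of the lemma in the paper has components of a common degree $d\geq 2$, so the gap is harmless there, but your proof makes explicit a hypothesis the authors left tacit. One small economy: you do not need Lemma~\ref{Lemma Geometric property of SG point}(2) to equate the two projection degrees; the isomorphism $k(C_i)\simeq k(C_j)$ as $k(\mathbb{P}^1)$-algebras in Definition~\ref{Def of SG point} already gives equality of the field-extension degrees directly.
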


The following lemma is implicitly used in frequent discussions on replacing the coordinates of the projective plane to simplify the definition equations of curves or the representation matrices of the Galois groups.

\begin{lemma} \label{Lemma the image of SG point}
	For a projective transformation $\sigma$ of $\mathbb{P}^2$, the map 
	\[ S\Delta(C_1,\dots,C_n) \rightarrow S\Delta(\sigma(C_1), \dots, \sigma(C_n)) \text{ given by } \hspace{1ex} P \mapsto \sigma(P) \]
	is bijective.  
\end{lemma}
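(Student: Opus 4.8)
The plan is to reduce everything to the geometric characterization of SG points in Lemma~\ref{Lemma Geometric property of SG point} and to exploit the compatibility of projections with projective transformations. The central observation I would establish first is that for any point $P \in \mathbb{P}^2$ the projective transformation $\sigma$ carries the pencil of lines through $P$ isomorphically onto the pencil of lines through $\sigma(P)$; parametrizing these pencils by $\mathbb{P}^1$, this identification is an automorphism $\bar\sigma$ of $\mathbb{P}^1$ fitting into the commutative diagram of rational maps
\[ \pi_{\sigma(P)} \circ \sigma = \bar\sigma \circ \pi_P \]
on all of $\mathbb{P}^2$, hence on each component after restriction. Because $\sigma$ restricts to a biregular isomorphism $C_i \to \sigma(C_i)$, it induces an isomorphism of function fields $\sigma^{*} : k(\sigma(C_i)) \to k(C_i)$, and pulling back the displayed identity gives $\sigma^{*} \circ \pi_{\sigma(P)}^{*} = \pi_P^{*} \circ \bar\sigma^{*}$. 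Since $\bar\sigma^{*}$ is an automorphism of $k(\mathbb{P}^1)$, the image of $\pi_{\sigma(P)}^{*}$ is carried by $\sigma^{*}$ exactly onto $K_P = \pi_P^{*} k(\mathbb{P}^1)$.

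With this in hand, I would verify that both conditions of Lemma~\ref{Lemma Geometric property of SG point} are preserved by $P \mapsto \sigma(P)$. For condition (1), the isomorphism $\sigma^{*}$ identifies the extension $k(\sigma(C_i))/\pi_{\sigma(P)}^{*} k(\mathbb{P}^1)$ with $k(C_i)/K_P$ as field extensions, so one is Galois precisely when the other is; thus $\sigma(P)$ is a Galois point for $\sigma(C_i)$ iff $P$ is a Galois point for $C_i$. For condition (2), given birational maps $\phi_{i,j} : C_i \dashrightarrow C_j$ with $\pi_P \circ \phi_{i,j} = \pi_P$, I would set $\psi_{i,j} = \sigma \circ \phi_{i,j} \circ \sigma^{-1} : \sigma(C_i) \dashrightarrow \sigma(C_j)$ and compute, using the commutative diagram twice,
\[ \pi_{\sigma(P)} \circ \psi_{i,j} = \bar\sigma \circ \pi_P \circ \phi_{i,j} \circ \sigma^{-1} = \bar\sigma \circ \pi_P \circ \sigma^{-1} = \pi_{\sigma(P)}. \]
Hence the required birational maps exist for $\sigma(C_1), \dots, \sigma(C_n)$, and $\sigma(P)$ is an SG point whenever $P$ is; that is, the map is well defined.

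Finally, applying the same argument to $\sigma^{-1}$ shows that $Q \mapsto \sigma^{-1}(Q)$ sends $S\Delta(\sigma(C_1),\dots,\sigma(C_n))$ into $S\Delta(C_1,\dots,C_n)$. These two maps are mutually inverse, so the stated map is a bijection. I expect the only genuine content to lie in the first step, namely setting up the automorphism $\bar\sigma$ of $\mathbb{P}^1$ and justifying the identity $\pi_{\sigma(P)} \circ \sigma = \bar\sigma \circ \pi_P$ carefully as rational maps, including checking that the centers of projection and the indeterminacy loci correspond correctly under $\sigma$. Once this compatibility is recorded, conditions (1) and (2) transport formally and no further computation is needed.
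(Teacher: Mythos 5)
Your proof is correct and takes essentially the same approach as the paper, whose entire argument is the single computation $\pi_{\sigma(P)} \circ (\sigma \circ \phi \circ \sigma^{-1}) = \pi_{\sigma(P)}$ for the conjugated birational maps. You are in fact more careful than the paper: you make explicit the automorphism $\bar\sigma$ of $\mathbb{P}^1$ identifying the two pencils (which the paper suppresses), you verify the Galois-point condition (1) of Lemma~\ref{Lemma Geometric property of SG point} rather than leaving it implicit, and you spell out bijectivity via $\sigma^{-1}$.
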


\begin{proof}
	Let $P$ be an SG point for $C=\cup_{i=1}^n C_n$. If $\phi:  C_i \dashrightarrow C_j $ is a birational map with $\pi_P \circ \phi = \pi_P$ for some  $i,j \in \{1, \dots, n\}$, then $\pi_{\sigma(P)} \circ (\sigma \circ \phi \circ \sigma^{-1}) = \pi_{\sigma (P)}$. Hence, $\sigma (P)$ is an SG point for $\sigma(C)$. 
\end{proof}

According to the following lemma, if a reduced curve consists of nonsingular components, then birational maps between the irreducible components appearing in the representations of the Galois groups are given by projective transformations of the projective plane.

\begin{lemma} \label{Lemma birat map extend to Proj trans}
	Let $C_1, C_2 \subset \mathbb{P}^2$ be nonsingular projective curves of same degree $d$. Let $\phi:C_2 \rightarrow C_1$ be an isomorphism. 
	\begin{enumerate}
		\item If $d \geq 4$, or;
		\item if $d=3$ and there is a point $P$ such that $\pi_P \circ \phi = \pi_P$ as a map $C_2 \rightarrow \mathbb{P}^1$, 
	\end{enumerate}
	then $\phi$ is a restriction of some projective transformation of $\mathbb{P}^2$. 
\end{lemma}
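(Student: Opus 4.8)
The plan is to reduce the whole statement to the single assertion that $\phi$ preserves the hyperplane bundle, i.e. that $\phi^{*}\mathcal{O}_{C_{1}}(1)\cong\mathcal{O}_{C_{2}}(1)$ in $\operatorname{Pic}(C_{2})$, and then to establish this line-bundle identity separately in the two cases. First I would record the standard fact that for a smooth plane curve $C\subset\mathbb{P}^{2}$ of degree $d$ the restriction map $H^{0}(\mathbb{P}^{2},\mathcal{O}(1))\to H^{0}(C,\mathcal{O}_{C}(1))$ is an isomorphism onto a $3$-dimensional space: this follows from the sequence $0\to\mathcal{O}_{\mathbb{P}^{2}}(1-d)\to\mathcal{O}_{\mathbb{P}^{2}}(1)\to\mathcal{O}_{C}(1)\to0$ together with $H^{0}(\mathcal{O}(1-d))=H^{1}(\mathcal{O}(1-d))=0$. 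Hence each $C_{i}\hookrightarrow\mathbb{P}^{2}$ is the embedding given by the \emph{complete} linear system $|\mathcal{O}_{C_{i}}(1)|$, and $\mathbb{P}^{2}=\mathbb{P}\bigl(H^{0}(C_{i},\mathcal{O}_{C_{i}}(1))^{*}\bigr)$. The claim is that $\phi$ extends to a projective transformation of $\mathbb{P}^{2}$ iff $\phi^{*}\mathcal{O}_{C_{1}}(1)\cong\mathcal{O}_{C_{2}}(1)$. The ``only if'' is immediate since a projective transformation preserves $\mathcal{O}_{\mathbb{P}^{2}}(1)$. For ``if'', the isomorphism $\phi$ induces a linear isomorphism $\phi^{*}\colon H^{0}(C_{1},\mathcal{O}_{C_{1}}(1))\xrightarrow{\sim}H^{0}(C_{2},\phi^{*}\mathcal{O}_{C_{1}}(1))=H^{0}(C_{2},\mathcal{O}_{C_{2}}(1))$ of the two $3$-dimensional spaces of linear forms; this is an element of $\operatorname{GL}_{3}$, whose induced projective transformation restricts to $\phi$ on $C_{2}$ (a point of $C_{2}$ corresponds to an evaluation functional, and $\phi^{*}$ sends evaluation at $x$ to evaluation at $\phi(x)$).

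For case (1), let $L:=\phi^{*}\mathcal{O}_{C_{1}}(1)$. Since $\phi$ is an isomorphism, $L$ is a very ample line bundle on $C_{2}$ of degree $d$ with $h^{0}(L)=h^{0}(C_{1},\mathcal{O}_{C_{1}}(1))=3$, so $|L|$ is a $g^{2}_{d}$ on $C_{2}$, as is $|\mathcal{O}_{C_{2}}(1)|$. By adjunction $K_{C_{i}}=\mathcal{O}_{C_{i}}(d-3)$, and since an isomorphism preserves the canonical bundle, $\phi^{*}K_{C_{1}}=K_{C_{2}}$ forces $(d-3)\bigl(L-\mathcal{O}_{C_{2}}(1)\bigr)=0$ in $\operatorname{Pic}(C_{2})$. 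For $d=4$ this already gives $L=\mathcal{O}_{C_{2}}(1)$. For $d\geq5$ the difference is a priori a nontrivial $(d-3)$-torsion class, and here I would invoke the classical uniqueness of the $g^{2}_{d}$ on a smooth plane curve of degree $\geq4$ (equivalently, uniqueness of the plane model up to projective equivalence): such a curve carries only one complete two-dimensional linear series of degree $d$, namely $|\mathcal{O}_{C}(1)|$. Applied to $C_{2}$ this yields $L=\mathcal{O}_{C_{2}}(1)$, and the reduction completes the case.

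For case (2), with $d=3$, adjunction gives $K_{C_{i}}=\mathcal{O}_{C_{i}}$, so the canonical bundle carries no information and an abstract isomorphism of plane cubics genuinely need not preserve $\mathcal{O}(1)$; this is exactly why the extra hypothesis is imposed. Writing it as $(\pi_{P}|_{C_{1}})\circ\phi=\pi_{P}|_{C_{2}}$ and pulling back $\mathcal{O}_{\mathbb{P}^{1}}(1)$ gives $\phi^{*}(\pi_{P}|_{C_{1}})^{*}\mathcal{O}_{\mathbb{P}^{1}}(1)=(\pi_{P}|_{C_{2}})^{*}\mathcal{O}_{\mathbb{P}^{1}}(1)$. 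When $P\notin C_{i}$ the fibres of $\pi_{P}|_{C_{i}}$ are the divisors $\ell\cap C_{i}$ cut by lines $\ell$ through $P$, so $(\pi_{P}|_{C_{i}})^{*}\mathcal{O}_{\mathbb{P}^{1}}(1)=\mathcal{O}_{C_{i}}(1)$ and we obtain $\phi^{*}\mathcal{O}_{C_{1}}(1)=\mathcal{O}_{C_{2}}(1)$ directly; when $P$ lies on the curves one first observes that the hypothesis forces $\phi(P)=P$ (the base point of the projection must be preserved), after which the same computation, now with $(\pi_{P}|_{C_{i}})^{*}\mathcal{O}_{\mathbb{P}^{1}}(1)=\mathcal{O}_{C_{i}}(1)(-P)$, again gives $\phi^{*}\mathcal{O}_{C_{1}}(1)=\mathcal{O}_{C_{2}}(1)$. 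By the reduction of the first paragraph, $\phi$ extends to a projective transformation.

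The hard part is precisely the $d\geq5$ subcase: the formal arguments produce only $(d-3)\bigl(\phi^{*}\mathcal{O}_{C_{1}}(1)-\mathcal{O}_{C_{2}}(1)\bigr)=0$, and removing this torsion is equivalent to the uniqueness of the plane model, the one substantive ingredient. Everything else—the complete-linear-system computation, the dualization giving the projective transformation, and the $d=3$ projection identity—is formal.
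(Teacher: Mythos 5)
Your overall strategy is the same as the paper's: for $d\geq 4$ you invoke the uniqueness of the $g^2_d$ on a smooth plane curve (Chang's theorem, cited in the paper via \cite{zbMATH03629105} and Appendix~A of \cite{zbMATH03891516}), and for $d=3$ you use the compatibility with $\pi_P$ to force $\phi^{*}\mathcal{O}_{C_1}(1)\cong\mathcal{O}_{C_2}(1)$; the reduction to this line-bundle identity via the completeness of $|\mathcal{O}_{C_i}(1)|$ is exactly what the paper means by ``$\id_{C_2}$ and $\phi$ are determined by the same (complete) linear system,'' and you spell it out more carefully. The adjunction remark that settles $d=4$ without quoting uniqueness of the plane model is a nice economy, and your identification of the $d\geq 5$ torsion issue as the one substantive input is accurate.

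There is, however, one concrete gap, in the inner sub-case of (2). When $P\in C_1\cap C_2$ you assert that the hypothesis $\pi_P\circ\phi=\pi_P$ ``forces $\phi(P)=P$,'' but it does not: evaluating the identity at $P$ only gives $\pi_P|_{C_1}(\phi(P))=\pi_P|_{C_2}(P)$, i.e.\ that $\phi(P)$ lies on the tangent line to $C_2$ at $P$. Indeed, if $C_1$ and $C_2$ meet transversally at $P$, then $\phi(P)=P$ would force $\pi_P|_{C_1}(P)$ (the tangent direction of $C_1$ at $P$) to equal the tangent direction of $C_2$ at $P$, a contradiction; so in that situation necessarily $\phi(P)\neq P$, the class $\phi^{*}\mathcal{O}_{C_1}(1)=\mathcal{O}_{C_2}(1)(\phi^{-1}(P)-P)$ differs from $\mathcal{O}_{C_2}(1)$, and $\phi$ does \emph{not} extend --- two double covers $\pi_P|_{C_i}\colon C_i\to\mathbb{P}^1$ with the same branch locus always admit such a $\phi$, so this configuration is not vacuous. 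In other words, statement (2) of the lemma as written needs $P\notin C_1\cup C_2$ (or tangency of the two cubics at $P$, plus further argument). To be fair, the paper's own one-line proof of case (2) is silent on this sub-case as well, and the lemma is only ever applied with $d=3$ to \emph{outer} SG points (Theorem~\ref{Theorem the number of outer SG points d>2}), where your argument is complete; but you should either restrict to $P$ off the curves or supply an actual proof of $\phi(P)=P$ rather than the parenthetical assertion.
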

\begin{proof}
	For Case (1), according to \cite{zbMATH03629105} and Appendix A 18 in \cite{zbMATH03891516}, there is a unique linear system $g^2_d$ of degree $d$ and projective dimension $2$ defined on $C_2$. The identity morphism $\id_{C_2}: C_2 \rightarrow C_2 \subset \mathbb{P}^2$ and morphism $\phi: C_2 \rightarrow C_1 \subset \mathbb{P}^2$ are both determined by $g^2_d$. In Case (2), since $\pi_P \circ \phi = \pi_P$, we know that $\id_{C_2}$ and $\phi$ are determined by the same complete linear system. Thus, in both cases, there exists a projective transformation $T$ of $\mathbb{P}^2$ such that $T \circ \id_{C_2} = \phi$. 
\end{proof}


\section{Outer SG points for a plane curve consisting of quadrics}

In this section, we investigate the number of outer SG points for a plane curve consisting of two nonsingular quadrics. 

\begin{theorem}\label{Theorem SG points two quadrics}
	Let $C = C_1 \cup C_2 \subset \mathbb{P}^2$ be a reduced plane curve. Assume that $C_1$ and $C_2$ are nonsingular quadratic curves. Then $\# S\Delta_{\mathrm{out}}(C_1, C_2) \in \{0,1,3,6\}$.   
\end{theorem}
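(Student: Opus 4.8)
The plan is to translate the SG condition into a statement about common tangent lines, and then to count those via projective duality and B\'ezout's theorem.

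First I would apply Lemma~\ref{Lemma Geometric property of SG point}. Since $\deg C_i = 2$ and $\ch k = 0$, the projection $\pi_P\colon C_i \dashrightarrow \mathbb{P}^1$ from any point $P \notin C_i$ is a degree-$2$ covering, which is automatically Galois; thus every point off $C_i$ is a Galois point for $C_i$, and condition (1) of the lemma is free. Hence, for an outer $P$, being an SG point reduces to condition (2): the existence of an isomorphism $\phi\colon C_1 \to C_2$ with $\pi_P|_{C_2}\circ \phi = \pi_P|_{C_1}$, that is, to $\pi_P|_{C_1}$ and $\pi_P|_{C_2}$ being isomorphic as degree-$2$ covers of $\mathbb{P}^1$ over the base. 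A degree-$2$ cover of $\mathbb{P}^1$ by $\mathbb{P}^1$ is determined up to isomorphism over the base by its branch divisor, which here is the pair of lines through $P$ tangent to $C_i$. Therefore $P$ is an outer SG point if and only if the two tangent lines from $P$ to $C_1$ coincide with the two tangent lines from $P$ to $C_2$; equivalently, both tangent lines from $P$ are common tangents of $C_1$ and $C_2$, so $P$ is the intersection of two distinct common tangent lines.

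Next I would count the common tangents by passing to the dual plane. Writing $C_i^{\vee} \subset (\mathbb{P}^2)^{\vee}$ for the dual conic, a line is tangent to $C_i$ exactly when the corresponding point lies on $C_i^{\vee}$, so the common tangents of $C_1$ and $C_2$ correspond bijectively to the points of $C_1^{\vee} \cap C_2^{\vee}$. Since $C = C_1 \cup C_2$ is reduced we have $C_1 \neq C_2$, hence $C_1^{\vee} \neq C_2^{\vee}$; as these are two distinct conics, B\'ezout's theorem gives that $C_1^{\vee} \cap C_2^{\vee}$ consists of $4$ points counted with multiplicity, so the number $k$ of distinct common tangents satisfies $k \in \{1,2,3,4\}$.

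Finally I would assemble the count. No three common tangents can be concurrent: if three were, the three corresponding points of $C_1^{\vee} \cap C_2^{\vee}$ would be collinear, forcing a line to meet the nonsingular conic $C_1^{\vee}$ in three points, which is impossible. Hence the $\binom{k}{2}$ pairwise intersections of the $k$ common tangents are pairwise distinct. Moreover each such intersection $P = \ell \cap \ell'$ of two distinct common tangents lies off $C_1 \cup C_2$: a tangent line through a point of $C_i$ must be tangent at that very point, so if $P \in C_i$ then $\ell$ and $\ell'$ would both equal the tangent at $P$, contradicting $\ell \neq \ell'$. Thus every such intersection point is an outer SG point, and by the first paragraph these exhaust them, giving $\# S\Delta_{\mathrm{out}}(C_1,C_2) = \binom{k}{2} \in \{0,1,3,6\}$. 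The main point to pin down carefully is the branch-divisor criterion of the first paragraph---that isomorphism of the two projections over $\mathbb{P}^1$ is equivalent to equality of the two tangent-line pairs---after which the remaining work is the purely combinatorial bookkeeping of intersection multiplicities showing that $k$ ranges exactly over $\{1,2,3,4\}$.
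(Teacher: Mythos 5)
Your proposal is correct and takes essentially the same route as the paper: both reduce the outer SG condition to the statement that the two tangent lines from $P$ are common tangents of $C_1$ and $C_2$ (your branch-divisor criterion is exactly the paper's discriminant argument for $k(C_1)\simeq k(C_2)$ over $k(\mathbb{P}^1)$), and then count by passing to the dual conics, where $S\Delta_{\mathrm{out}}(C_1,C_2)$ is in bijection with pairs of distinct points of $\hat{C}_1\cap\hat{C}_2$, giving $\binom{k}{2}\in\{0,1,3,6\}$. If anything, you are more explicit than the paper on two points it leaves implicit: that no three common tangents are concurrent (so distinct pairs give distinct intersection points), and that the intersection of two distinct common tangents cannot lie on $C_1\cup C_2$, so these points are genuinely outer.
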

\begin{proof}
	Let $\hat{\mathbb{P}}^2$ be the dual projective plane and $\hat{C}_1$ and $\hat{C}_2$ be the dual curves of $C_1$ and $C_2$, respectively. Let  $\mathcal{L}(\hat{C}_1, \hat{C}_2)$ be the set $\{ \overline{pq} \subset \hat{\mathbb{P}}^2 \mid p, q \in \hat{C}_1 \cap \hat{C}_2, p \ne q \}$, where $\overline{pq}$ is the line passing through points $p$ and $q$. Then we have a map 
			$\varphi: S\Delta_{\mathrm{out}} (C_1, C_2) \rightarrow \mathcal{L}(\hat{C}_1, \hat{C}_2)$
			as follows. For $P \in S\Delta_{\mathrm{out}} (C_1, C_2)$, there exists an isomorphism $\phi: C_2 \rightarrow C_1$ such that $\pi_P \circ \phi = \pi_P$. Thus, there exist just two lines $l_p$ and $l_q$ in $\mathbb{P}^2$ passing through $P$ and tangent to the two curves $C_1$ and $C_2$. Let $p$ and $q$ be the points in $\hat{\mathbb{P}}^2$ that correspond to lines $l_p$ and $l_q$, respectively. $\overline{pq} \in \mathcal{L}(\hat{C}_1, \hat{C}_2)$. We set $\varphi (P) = \overline{pq}$. 
			As $\overline{pq}$ in $\hat{\mathbb{P}}^2$ is the line corresponding to the point $P$ in $\mathbb{P}^2$, $\varphi$ is injective. 
			
	We demonstrate that $\varphi$ is surjective. For a line $\overline{pq} \in \mathcal{L}(\hat{C}_1, \hat{C}_2)$, let $P$ be the point in $\mathbb{P}^2$ corresponding to $\overline{pq}$. Because $\pi_P:C_1 \rightarrow \mathbb{P}^1$ and $\pi_P:C_2 \rightarrow \mathbb{P}^1$ are of degree $2$, $P$ is a Galois point for $C_1$ and for $C_2$. Since the branch loci of $\pi_P:C_1 \rightarrow \mathbb{P}^1$ and $\pi_P:C_2 \rightarrow \mathbb{P}^1$ are the same, we have that the zero loci of the discriminants of the quadratic extensions $k(C_1) / \pi_P^*k (\mathbb{P}^1)$ and $k(C_2) / \pi_P^*k (\mathbb{P}^1)$ are the same. Thus, $k(C_1) \simeq k(C_2)$ over $k(\mathbb{P}^1)$. The point $P$ is an SG point for $C$.  
	
	As the dual curves $\hat{C}_1$ and $\hat{C}_2$ are nonsingular quadrics, $\# (\hat{C}_1 \cap \hat{C}_2) \in \{1,2,3,4\}$. We have that $\# S\Delta_{\mathrm{out}}(C_1, C_2)= \# \mathcal{L}(\hat{C}_1, \hat{C}_2) \in \{0,1,3,6\}$.  
\end{proof}

\begin{example} 
	Let $C_1$ and $C_2$ be the nonsingular quadric curves defined by $X^2+Y^2-Z^2=0$ and $X^2+Y^2-4YZ+3Z^2=0$, respectively. Then the dual curves $\hat{C}_1$ and $\hat{C}_2$ are denoted as $X^2+Y^2-Z^2=0$ and $X^2-3Y^2-4YZ-Z^2=0$, respectively. Thus,
	\[ \hat{C}_1 \cap \hat{C}_2 = \{ R_1:=(1:0:1), R_2:=(-1:0:1), R_3:=(0:-1:1) \}. \]
	In the dual projective plane $\hat{\mathbb{P}}^2$, we have that $\overline{R_1 R_2}:Y=0$, $\overline{R_2 R_3}:X+Y+Z=0$ and $\overline{R_3 R_1}:-X+Y+Z=0$. These three lines correspond to the SG points in $\mathbb{P}^2$, that is,  
	\[ S\Delta_{\mathrm{out}}(C_1,C_2) = \{P_1:=(0:1:0), P_2:=(1:1:1), P_3:=(-1:1:1) \}. \] 
	It is clear that three lines $l:X+Z=0$, $l':X-Z=0$ and $l^{\sharp}: Y-Z = 0$ in $\mathbb{P}^2$, which are corresponding to the three points $R_1$, $R_2$ and $R_3$ in $\hat{\mathbb{P}}^2$, respectively, tangent to two curves $C_1$ and $C_2$, and satisfy $l \cap l' = {P_1}$, $l' \cap l^{\sharp} = \{P_2\}$ and $l^{\sharp} \cap l = \{P_3\}$. 
\end{example}

\begin{example}
	Let $C_1: X^2 - 4YZ = 0$ and $C_2:X^2  +4Y^2 - 4YZ =0$. Then the dual curves of these two curves are $\hat{C}_{1} : X^2 - YZ=0$ and 
$\hat{C}_{2}:  X^2 - YZ - Z^2=0$. Because $\hat{C}_{1} \cap\hat{C}_{2} = \{ (0 : 1 : 0) \}$, we see that $S\Delta_{\mathrm{out}}(C_1,C_2)=\emptyset$. 
\end{example}

\begin{example}
	Let $C_1: X^2+Y^2-Z^2=0$ and $C_2:Y^2 + 2(X+Z)(X+Y)=0$. Then $S\Delta_{\mathrm{out}}(C_1,C_2)=\{ (0:1:0) \}.$
\end{example}

\begin{example}
Let $C_i$ ($i=1, \dots n$) be a plane curve defined by 
\[ X^2 + \frac{1}{i} Y^2 - \frac{1}{1+i} Z^2 = 0.\]
Let $C$ be the reduced curve $\cup_{i=1}^n C_i$. 
Then the dual curves $\hat{C}_i$ are given by $X^2+iY^2-(1+i)Z^2 = 0$. For any $i,j \in \{1,\dots,n\}$ with $i \ne j$, $\hat{C}_i \cap \hat{C}_j = \{(1:1:1),(-1:1:1),(1:-1:1),(1:1:-1)\}$. Hence, $S\Delta_{\mathrm{out}}(C_i, C_j) = \{ (0:1:1),(1:0:1),(1:1:0),(0:-1:1),(-1:0:1),(-1:1:0)\}$. By Lemma~\ref{Lemma SG points for n or 2 components}, the set is equal to $S\Delta_{\mathrm{out}}(C_1, \dots,  C_n)$.
\end{example}


\section{Outer SG points for a plane curve consisting of curves of the same degree $d \geq 3$ }

In this section, we prove the theorem below and present an example of a reduced plane curve formed by two nonsingular curves of the same degree $d \geq 3$, which has an outer SG point.

\begin{theorem} \label{Theorem the number of outer SG points d>2}
	Let $C=C_1 \cup C_2 \subset \mathbb{P}^2$ be a reduced curve. Assume that $C_1$ and $C_2$ are nonsingular curves of the same degree $d \geq 3$. Then $\# S\Delta_{\mathrm{out}}(C_1, C_2) \leq 1$. 
\end{theorem}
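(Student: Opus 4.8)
The plan is to argue by contradiction and reduce everything to a rigid coordinate computation on the Fermat curve. Suppose $\#S\Delta_{\mathrm{out}}(C_1,C_2)\geq 2$ and fix two distinct outer SG points $P\neq P'$. By Lemma~\ref{Lemma set of inner/outer SG points} both lie in $\Delta_{\mathrm{out}}(C_1)\cap\Delta_{\mathrm{out}}(C_2)$. Since $C_1$ is nonsingular of degree $d\geq 3$, Theorem~\ref{Theorem Galois points}(2) gives $\#\Delta_{\mathrm{out}}(C_1)\in\{0,1,3\}$, so the presence of two points forces $\#\Delta_{\mathrm{out}}(C_1)=3$; by Theorem~\ref{Theorem Galois points}(3),(4) the curve $C_1$ is projectively equivalent to the Fermat curve, and likewise $C_2$. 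Using Lemma~\ref{Lemma the image of SG point} I would first normalize $C_1=F_d:X^d+Y^d+Z^d=0$, so that $\Delta_{\mathrm{out}}(C_1)=\{(1:0:0),(0:1:0),(0:0:1)\}$; then, since coordinate permutations preserve $F_d$ and act as the full symmetric group on these three vertices, I may further assume $P=(0:0:1)$ and $P'=(0:1:0)$.

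The key geometric input is that the isomorphism witnessing the SG condition at $P$ is a \emph{central collineation with center $P$}. By Lemma~\ref{Lemma Geometric property of SG point} there is an isomorphism $\phi:C_2\to C_1$ with $\pi_P\circ\phi=\pi_P$, and Lemma~\ref{Lemma birat map extend to Proj trans} (unconditionally for $d\geq 4$, and for $d=3$ precisely because $\pi_P\circ\phi=\pi_P$) extends $\phi$ to a projective transformation $T_P$ of $\mathbb{P}^2$ with $T_P(C_2)=C_1$. Writing $\pi_P$ and $\pi_P\circ T_P$ as pairs of linear forms, their $2\times 2$ determinant is a conic vanishing on the irreducible curve $C_2$ of degree $d>2$, hence it vanishes identically; thus $\pi_P\circ T_P=\pi_P$ on all of $\mathbb{P}^2$, i.e. $T_P$ fixes $P$ and every line through $P$. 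In the normalized coordinates this means $T_P=\left(\begin{smallmatrix}1&0&0\\0&1&0\\a&b&c\end{smallmatrix}\right)$ with $c\neq 0$, and from $C_2=T_P^{-1}(F_d)$ I read off $C_2:X^d+Y^d+(aX+bY+cZ)^d=0$.

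The contradiction then comes from using the SG condition at $P'$ as well. Since $T_P$ carries outer Galois points of $C_2$ to those of $C_1$, one computes $\Delta_{\mathrm{out}}(C_2)=\{(c:0:-a),(0:c:-b),(0:0:1)\}$; as $P'=(0:1:0)$ is an SG point it must lie here, which forces $b=0$, so $C_2:X^d+Y^d+(aX+cZ)^d=0$. Applying the same central-collineation analysis at $P'$ produces a transformation $T_{P'}=\left(\begin{smallmatrix}1&0&0\\ \alpha&\beta&\gamma\\0&0&1\end{smallmatrix}\right)$ with $T_{P'}(C_2)=F_d$, so the defining polynomial of $C_2$ is proportional to $X^d+(\alpha X+\beta Y+\gamma Z)^d+Z^d$. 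Comparing with $X^d+Y^d+(aX+cZ)^d$: the requirement that the only $Y$-monomial be $Y^d$ forces $\alpha=\gamma=0$; the absence of mixed $X,Z$ monomials then forces $a=0$ (recall $c\neq 0$); and matching the $Z^d$-coefficient gives $c^d=1$. Hence $C_2:X^d+Y^d+Z^d=C_1$, contradicting that $C=C_1\cup C_2$ is reduced with two distinct components. Therefore $\#S\Delta_{\mathrm{out}}(C_1,C_2)\leq 1$.

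The main obstacle is the middle step: establishing that the abstract $k(\mathbb{P}^1)$-algebra isomorphism $k(C_2)\simeq k(C_1)$ guaranteed by the SG condition is realized geometrically by a central collineation centered exactly at the SG point. This rigidity---combining Lemma~\ref{Lemma birat map extend to Proj trans} with the observation that a conic vanishing on an irreducible curve of degree $\geq 3$ must vanish identically---is what converts the problem into the finite coordinate comparison above, and the symmetry of the Fermat curve under coordinate permutations is what keeps that comparison to a single case.
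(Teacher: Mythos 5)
Your proof is correct and follows essentially the same strategy as the paper's: reduce to the case $C_1=F_d$ with the two SG points at coordinate vertices, extend the SG isomorphisms to projective transformations via Lemma~\ref{Lemma birat map extend to Proj trans}, pin down their matrices, and compare the two resulting equations for $C_2$ to force $C_2=C_1$, a contradiction. Your explicit central-collineation argument (the vanishing conic) and your use of $T_P^{-1}(\Delta_{\mathrm{out}}(C_1))=\Delta_{\mathrm{out}}(C_2)$ to kill the parameter $b$ are just more detailed versions of what the paper packages as Claim~\ref{Claim phi(P_2) = P_2 psi(P_1) = P_1} and its ``elementary matrix calculations.''
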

\begin{proof}
	We assume that $\# S\Delta_{\mathrm{out}}(C_1, C_2) \geq 2$. We denote the three points $(1:0:0)$, $(0:1:0)$ and $(0:0:1)$ by $P_1$, $P_2$ and $P_3$, respectively. From Theorem~\ref{Theorem Galois points} (3) and (4) and Lemmas~\ref{Lemma Geometric property of SG point} and \ref{Lemma birat map extend to Proj trans}, we have that $C_1$ and $C_2$ are projectively equivalent to the Fermat curve $F_d: X^d+Y^d+Z^d=0$. By performing projective transformations, we assume that $C_1 = F_d$. Moreover, we assume that $\{P_1, P_2\} \subset S\Delta_{\mathrm{out}}(C_1,C_2)$. By Lemmas~\ref{Lemma Geometric property of SG point} and \ref{Lemma birat map extend to Proj trans} again, there exist projective transformations $\phi$ and $\psi$ such that $\phi(C_2)=C_1$, $\psi(C_2)=C_1$, $\pi_{P_1} \circ \phi = \pi_{P_1}$ and $\pi_{P_2} \circ \psi = \pi_{P_2}$. We observe that $\phi(P_1) = P_1$ and $\psi(P_2) = P_2$.
	
	\begin{claim} \label{Claim phi(P_2) = P_2 psi(P_1) = P_1}
		$\phi(P_2) = P_2$ and $\psi(P_1) = P_1$. 
	\end{claim}
	We prove Claim~\ref{Claim phi(P_2) = P_2 psi(P_1) = P_1}. From Lemma~\ref{Lemma set of inner/outer SG points}, $P_2$ is an outer Galois point for $C_2$. Thus, $\phi(P_2)$ is an outer Galois point for $C_1=\phi(C_2)$. From Theorem~\ref{Theorem Galois points} (4),  $\phi(P_2) \in \{P_1, P_2, P_3\}$. As $\pi_{P_1} \circ \phi = \pi_{P_1}$, $\phi (P_2)$ is on the line passing through points $P_1$ and $P_2$. Because $P_1$, $P_2$, and $P_3$ are not collinear and $\phi(P_1) = P_1$, we have $\phi(P_2) = P_2$. Similarly, we obtain that $\psi(P_1) = P_1$.  
	
	From Claim~\ref{Claim phi(P_2) = P_2 psi(P_1) = P_1} and some elementary matrix calculations, we obtain 
	\[	\phi = 	\begin{pmatrix}
					\lambda & 0 & \alpha \\
					0 & 1 & 0 \\
					0 & 0 & 1
				\end{pmatrix} \text{ and }
		\psi =	\begin{pmatrix}
					1 & 0 & 0 \\
					0 & \mu & \beta \\
					0 & 0 & 1
				\end{pmatrix}, 
	\]
	where $\lambda, \mu \in k\setminus\{0\}$ and $\alpha, \beta \in k$. 
	\begin{claim} \label{Claim alpha, beta, lambda, mu}
		$\alpha = \beta = 0$ and $\lambda^d=\mu^d=1$.
	\end{claim}
	Let us prove Claim~\ref{Claim alpha, beta, lambda, mu}. Curves $\phi^{-1}(C_1)$ and $\psi^{-1}(C_2)$ are expressed as $F:=(\lambda X + \alpha Z)^d + Y^d + Z^d =0$ and $G:= X^d + (\mu Y + \beta Z)^d + Z^d = 0$, respectively. As $C_2=\phi^{-1}(C_1)=\psi^{-1}(C_1)$, there exists $\xi \in k\setminus \{ 0 \}$ such that $F = \xi G$. By comparing both sides of each term, we observe that $\alpha = \beta = 0$ and $\lambda^d=\mu^d=1$.
	
	According to Claim~\ref{Claim alpha, beta, lambda, mu}, the curve $C_2$ must coincide with the curve $C_1$: However, this finding is contradictory. 
\end{proof}

\begin{example}
	Let $C_i \subset \mathbb{P}^2$ ($i = 1, \dots, n$) be the plane curves defined by $X^d + \zeta_n^i Y^d + Z^d = 0$, where $d \geq 3$ and $\zeta_n$ is a primitive $n$th root of unity. Let $C$ be the reduced plane curve $\cup_{i=1}^n C_i$. Then the point $P:=(0:1:0)$ is an SG point for $C$. Indeed, from Theorem~\ref{Theorem Galois points}~(4), $P$ is a Galois point for $C_n$. Since the projective transformations 
	\[\sigma^i:=\begin{pmatrix}
		1 & 0 & 0 \\ 0 & \zeta_{nd}^i & 0 \\ 0 & 0 & 1
	\end{pmatrix}, \text{ where $\zeta_{nd}$ is a primitive ($nd$)th root of unity}, \]
	provide $\sigma^i: C_i \rightarrow C_n$, $P$ is also a Galois point for $C_i$. Moreover, $\sigma^i$ satisfy $\pi_P \circ \sigma^i = \pi_P$.   
\end{example}

\section{Inner SG points for a plane curve consisting of quartic curves}

In this section, we consider the inner SG points for a reduced plane curve $C$ consisting of nonsingular components. If the components of $C$ contain a nonsingular curve of degree $d \geq 5$, then the number of SG points is at most one, according to Theorem~\ref{Theorem Galois points} (5) and Lemma~\ref{Lemma number of SG points}. We consider the number of SG points for a reduced plane curve $C$ consisting of nonsingular quartic curves. We denote by $C_1^{(1)}$, $C_2^{(1)}$, $C_2^{(2)}$ and $C_2^{(3)}$ the quartic curves defined by 
	\[ \begin{split}	
		&C_1^{(1)}: XY^3+X^4+Z^4=0, \\
		&C_2^{(1)}: X((\zeta_4-1)X+\zeta_4 Y)^3+X^4+Z^4=0, \\
		&C_2^{(2)}: X(-2X-Y)^3+X^4+Z^4=0 \text{ and}\\
		&C_2^{(3)}: X((-\zeta_4-1)X-\zeta_4Y)^3+X^4+Z^4=0, 
		\end{split}
	\]
where $\zeta_4$ denotes a primitive 4th root of unity. 

\begin{theorem}\label{Theorem the number of inner SG points}
	Let $C=C_1 \cup C_2 \subset \mathbb{P}^2$ be a reduced plane curve consisting of two nonsingular quartic curves. 
	\begin{enumerate}
		\item The number of inner SG points for $C$ is $0$, $1$ or $2$.
		\item If $\# S\Delta_{\mathrm{in}}(C_1, C_2) = 2$, then there exists a projective transformation $T$ of $\mathbb{P}^2$ such that $T(C_1)=C_1^{(1)}$ and $T(C_2) \in \{ C_2^{(1)}, C_2^{(2)}, C_2^{(3)}\}$. 
		\item Assume that $C_1=C_1^{(1)}$. Then $C_2 \in \{ C_2^{(1)}, C_2^{(2)}, C_2^{(3)}\}$ iff $S\Delta_{\mathrm{in}}(C_1, C_2) = \{ (0:1:0), (-1:1:0) \}$.
	\end{enumerate}		  
\end{theorem}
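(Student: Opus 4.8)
The plan is to reduce to the distinguished quartic $C_1^{(1)}:XY^3+X^4+Z^4=0$ and carry out one explicit elimination, then propagate the result using the automorphism group. First, if $\#S\Delta_{\mathrm{in}}(C_1,C_2)\geq 2$, then by Lemma~\ref{Lemma set of inner/outer SG points} each component has at least two inner Galois points; by Theorem~\ref{Theorem Galois points}~(6) it then has exactly four, and by Theorem~\ref{Theorem Galois points}~(7) it is projectively equivalent to $C_1^{(1)}$. Using Lemma~\ref{Lemma the image of SG point} I would normalize $C_1=C_1^{(1)}$, so by Theorem~\ref{Theorem Galois points}~(9) every inner SG point lies in $\{Q_0,Q_1,Q_2,Q_3\}$ with $Q_0=(0:1:0)$, $Q_1=(-1:1:0)$, $Q_2=(-\omega:1:0)$, $Q_3=(-\omega^2:1:0)$; these are exactly the four points of $C_1^{(1)}\cap\{Z=0\}$, the roots on $Z=0$ of $XY^3+X^4=X(Y+X)(Y+\omega X)(Y+\omega^2 X)$. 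I would record the key structural fact that these four points, viewed on the line $Z=0$, are in equianharmonic position, so the group of projective transformations of $\mathbb{P}^2$ preserving $C_1^{(1)}$ acts on them through $A_4$ (the $Z$-coordinate supplies the needed fourth roots of the scalars), hence $2$-transitively.

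Second, I would translate ``SG'' into linear algebra. Because $d=4\geq 4$, Lemma~\ref{Lemma Geometric property of SG point} and Lemma~\ref{Lemma birat map extend to Proj trans} show that $Q$ is an inner SG point precisely when there is a projective transformation $\phi$ with $\phi(C_2)=C_1$ and $\pi_Q\circ\phi=\pi_Q$; the second condition forces $\phi$ to be a central collineation with center $Q$ (it fixes $Q$ and maps each line through $Q$ to itself). A direct check shows the central collineations with center $Q_0$ are the maps $(X:Y:Z)\mapsto(X:sX+tY+uZ:Z)$ with $t\neq 0$, so the curves admitting $Q_0$ as an inner SG point are exactly those of the form $V\bigl(X(sX+tY+uZ)^3+X^4+Z^4\bigr)$. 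Passing to coordinates $(X+Y:Y:Z)$ carries $Q_1$ to the analogue of $Q_0$ and yields the corresponding family for $Q_1$.

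The heart of the proof is to intersect these two families (call this Lemma~A): I impose that a curve $C_2=V\bigl(X(sX+tY+uZ)^3+X^4+Z^4\bigr)$ also admit $Q_1$ as an inner SG point. Writing $\phi$ for the central collineation at $Q_0$ with $\phi(C_2)=C_1$, this asks for a central collineation $\psi$ at $Q_1$ with $\psi(C_2)=C_1$; then $\tau:=\psi\phi^{-1}\in\Aut(C_1^{(1)})$, and the condition becomes that $\tau\phi$ be central at $Q_1$. Eliminating the parameters from the identity $F\circ\psi=\xi\,(F\circ\phi)$ with $F=XY^3+X^4+Z^4$, I expect finitely many solutions and must verify that the resulting curves are exactly $C_2^{(1)},C_2^{(2)},C_2^{(3)}$, all of which have the predicted shape $XL^3+X^4+Z^4$ with $L=sX+tY$ (i.e.\ $u=0$). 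This elimination is where the genuine computation lies, and it is the main obstacle; the presence of $\zeta_4$ in $C_2^{(1)},C_2^{(3)}$ indicates that the solutions are organized by the order-$4$ symmetry $Z\mapsto\zeta_4 Z$ of $C_1^{(1)}$, which should make the system tractable.

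Finally I would assemble the three parts, using Lemma~A and the direct verification (Lemma~B) that each $C_2^{(i)}$ has inner SG set exactly $\{Q_0,Q_1\}$: the two required central collineations exist by construction, and $Q_2,Q_3$ are excluded by checking that $C_2^{(i)}$ does not lie in their SG families. Part~(3) is then Lemma~A together with Lemma~B. For Part~(2), after normalizing $C_1=C_1^{(1)}$ I use $2$-transitivity to move the two SG points onto $\{Q_0,Q_1\}$ by some $T\in\Aut(C_1^{(1)})$ and apply Lemma~A to $T(C_2)$. For Part~(1), three inner SG points would, after moving two of them onto $\{Q_0,Q_1\}$, force $C_2\in\{C_2^{(1)},C_2^{(2)},C_2^{(3)}\}$ by Lemma~A, contradicting the ``exactly $\{Q_0,Q_1\}$'' half of Lemma~B; hence the number of inner SG points is at most two. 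Besides the elimination in Lemma~A, the ``no third SG point'' half of Lemma~B is the other step needing real computation.
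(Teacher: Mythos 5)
Your proposal follows essentially the same route as the paper: reduce via Theorem~\ref{Theorem Galois points}~(6), (7), (9) to $C_1=C_1^{(1)}$ with all inner SG points among its four inner Galois points, use automorphisms of $C_1^{(1)}$ (your $A_4$/2-transitivity observation is the paper's explicit $\sigma_1,\sigma_2$ replacements) to place two of them at $(0:1:0)$ and $(-1:1:0)$, and determine $C_2$ by eliminating the parameters of the two central collineations --- this elimination is exactly the paper's Lemma~\ref{Lemma repres matrix inner SG points}, which confirms it closes (forcing $u=0$, $a^4=1$, $c=a^3$) and yields precisely $C_2^{(1)},C_2^{(2)},C_2^{(3)}$. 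The other computation you flag, excluding a third SG point, is done in the paper by checking that $\phi_1(P_3)\in\{P_3,P_4\}$ would force $a=1$ or $a=-\omega$, both incompatible with $a^4=1$; so your plan is sound and matches the paper's argument.
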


\begin{lemma} \label{Lemma repres matrix inner SG points} 
	Let $C_1$ be the curve $C_1^{(1)}$. Let  $P_1$ and $P_2$ be the points $(0:1:0)$ and $(-1:1:0)$, respectively. Let $\sigma_1$ and $\sigma_2$ be projective transformations of $\mathbb{P}^2$. Then the following two conditions are equivalent: 
	\begin{enumerate}
		\item $\pi_{P_i} \circ \sigma_i = \pi_{P_i}$ for $i=1,2$, $\sigma_i(P_j)=P_j$ for all $1 \leq i,j \leq 2$, and $\sigma_1^{-1}(C_1)=\sigma_2^{-1}(C_1)$; 
		\item $\sigma_1=\begin{pmatrix}
			1 & 0 & 0 \\
			a-1 & a & 0 \\
			0 & 0 & 1
		\end{pmatrix}$ and 
		$\sigma_2=\begin{pmatrix}
			c & 0 & 0 \\
			-c+1 & 1 & 0 \\
			0 & 0 & 1
		\end{pmatrix}$, where $a, c\in k$, $a^4=1$ and $c=a^3$.
	\end{enumerate} 
\end{lemma}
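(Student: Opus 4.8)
The plan is to prove the two implications separately, with $(2)\Rightarrow(1)$ a direct substitution and $(1)\Rightarrow(2)$ the substantive direction. Throughout I write $F = XY^3 + X^4 + Z^4$ for the defining polynomial of $C_1 = C_1^{(1)}$ and note that, viewing a projective transformation $\sigma$ as a matrix, the curve $\sigma^{-1}(C_1)$ has defining polynomial $F\circ\sigma$.

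For $(1)\Rightarrow(2)$ I would first extract the matrix shapes from the projection conditions. Since $P_1=(0:1:0)$, the projection $\pi_{P_1}$ is $(X:Y:Z)\mapsto(X:Z)$, and imposing $\pi_{P_1}\circ\sigma_1=\pi_{P_1}$ forces the first and third rows of $\sigma_1$ to be proportional to $(1,0,0)$ and $(0,0,1)$ with a common factor; after normalizing the scalar, $\sigma_1=\left(\begin{smallmatrix}1&0&0\\ m_{21}&m_{22}&b\\ 0&0&1\end{smallmatrix}\right)$. Since $P_2=(-1:1:0)$, $\pi_{P_2}$ is $(X:Y:Z)\mapsto(X+Y:Z)$, and an analogous computation gives, after normalization, $\sigma_2=\left(\begin{smallmatrix}n_{11}&0&e\\ n_{21}&1&-e\\ 0&0&1\end{smallmatrix}\right)$. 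The fixing conditions $\sigma_i(P_j)=P_j$ then pin down the remaining entries: $\sigma_1(P_2)=P_2$ gives $m_{22}=1+m_{21}$, so writing $a=m_{22}$ we obtain the first column $(1,a-1,0)$; and $\sigma_2(P_1)=P_1$, $\sigma_2(P_2)=P_2$ give $n_{21}=1-n_{11}$, so writing $c=n_{11}$ we reach the stated shapes up to the single unknowns $b$ and $e$. Invertibility of $\sigma_1$ forces $a=\det\sigma_1\neq 0$, which I use below.

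The crux is the curve condition $\sigma_1^{-1}(C_1)=\sigma_2^{-1}(C_1)$, i.e. $F\circ\sigma_1=\xi\,(F\circ\sigma_2)$ for some nonzero $\xi$. Both polynomials have $Z^4$-coefficient $1$ (the substituted cubic $X\cdot(\cdots)^3$ contributes no pure $Z^4$, and in $F\circ\sigma_2$ the contributions $e^4Z^4$ from $(cX+eZ)^4$ and $-e^4Z^4$ from the cube cancel), so $\xi=1$ and it suffices to equate coefficients. The \emph{key observation} is that, apart from $Z^4$, every monomial of $F\circ\sigma_1$ is divisible by $X$ (coming from the leading $X$ of $F$); in particular the $Y^3Z$-coefficient of $F\circ\sigma_1$ is $0$, whereas that of $F\circ\sigma_2$ equals $e$ (from $eZ\cdot Y^3$). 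This forces $e=0$. With $e=0$, $F\circ\sigma_2$ has no monomial divisible by $Z$ except $Z^4$, so comparing $XY^2Z$-coefficients gives $3a^2b=0$, and $a\neq 0$ yields $b=0$. Finally, with $b=e=0$, comparing $XY^3$-coefficients gives $c=a^3$, and comparing $X^2Y^2$-coefficients gives $(a-1)a^2=c(1-c)$; substituting $c=a^3$ reduces this to $a^2(a^4-1)=0$, whence $a^4=1$. This establishes $(2)$.

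The converse $(2)\Rightarrow(1)$ is a verification: the stated matrices manifestly satisfy the projection and fixing conditions, and with $b=e=0$, $c=a^3$, $a^4=1$ one has $a^{-1}=a^3=c$, so $(a-1)X+aY=a\bigl((1-c)X+Y\bigr)$ and hence $F\circ\sigma_1=cX\bigl((1-c)X+Y\bigr)^3+X^4+Z^4$, while $F\circ\sigma_2=cX\bigl((1-c)X+Y\bigr)^3+c^4X^4+Z^4$; since $c^4=a^{-4}=1$ these agree, giving $\sigma_1^{-1}(C_1)=\sigma_2^{-1}(C_1)$. I expect the main obstacle to be the bookkeeping in the coefficient comparison of the substituted quartics; the trick that keeps it short is selecting the monomials $Y^3Z$ and $XY^2Z$, which isolate $e$ and $b$ by exploiting that $F\circ\sigma_1$ cannot produce any $Z$-monomial without an accompanying factor of $X$.
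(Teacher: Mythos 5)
Your proof is correct and takes essentially the same route as the paper's: you derive the matrix shapes of $\sigma_1$ and $\sigma_2$ from the projection and fixing conditions and then compare coefficients of $F\circ\sigma_1$ and $F\circ\sigma_2$ to force $b=d=0$, $c=a^3$, $a^4=1$. You merely make explicit the coefficient comparison (and the converse verification) that the paper dismisses as elementary calculation.
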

\begin{proof}
	The implication $(2) \Rightarrow (1)$ is clear. We now show that $(1) \Rightarrow (2)$. Assume that condition (1) holds. From some elementary matrix calculations, we obtain 
	\[	\sigma_1=\begin{pmatrix}
			1 & 0 & 0 \\
			a-1 & a & b \\
			0 & 0 & 1
		\end{pmatrix} \text{ and } 
		\sigma_2=\begin{pmatrix}
			c & 0 & d \\
			-c+1 & 1 & -d \\
			0 & 0 & 1
		\end{pmatrix},\] 
	where $a,b,c,d \in k$. The curves $\sigma_1^{-1}(C_1)$ and $\sigma_2^{-1}(C_1)$ are given by 
	\[ F_1 :=X ((a-1)X + aY + bZ)^3 +X^4+Z^4=0 \text{ and} \] 
	\[F_2 := (cX +dZ) ((-c+1)X+Y-dZ)^3 + (cX+dZ)^4 + Z^4 =0.\]
	Because $\sigma_1^{-1}(C_1)=\sigma_2^{-1}(C_1)$, there exists $f \in k \setminus \{0\}$ such that $F_1=fF_2$. By comparing both sides of each term, $a^4=1$, $c=a^3$ and $b=d=0$. 
\end{proof}

Note that $\sigma_1$ and $\sigma_2$ in (2) of Lemma~\ref{Lemma repres matrix inner SG points} satisfy $\sigma_1 \sigma_2 =\sigma_2 \sigma_1$. 

\begin{proof}[Proof of Theorem~\ref{Theorem the number of inner SG points}]
	Let us prove assertion (1). We assume $\# S \Delta_{\mathrm{in}}(C_1, C_2) \geq 2$. From Lemma~\ref{Lemma set of inner/outer SG points} and Theorem~\ref{Theorem Galois points} (6) and (7), the curves $C_1$ and $C_2$ are projectively equivalent to the curve $C_1^{(1)}$. By performing projective transformations, we assume that $C_1 = C_1^{(1)}$. Then, by Lemma~\ref{Lemma set of inner/outer SG points} again and Theorem~\ref{Theorem Galois points} (9), $S\Delta_{\mathrm{in}}(C_1, C_2) \subset \{P_1, P_2, P_3, P_4\}$, where $P_1=(0:1:0)$, $P_2=(-1:1:0)$, $P_3=(-\omega:1:0)$ and $P_4=(-\omega^2:1:0)$ ($\omega$ is a primitive cubic root of unity). We assume that $P_1 \in S \Delta_{\mathrm{in}}(C_1, C_2)$ by taking a projective transformation if we need, for example, $\sigma_2$, $\sigma_2^2$, $\sigma_2 \sigma_1$ and $\sigma_2 \sigma_1^2$, where	
	\begin{equation} \label{Equation automorphisms}
			 	\sigma_1:= \begin{pmatrix}
			1 & 0 & 0 \\ 0 & \omega^2 & 0 \\ 0 & 0 & 1 
		\end{pmatrix} \text{ and }
		\sigma_2:=\begin{pmatrix}
			1 & \omega^2 & 0 \\ 2\omega^2 & -\omega & 0 \\ 0 & 0 & \sqrt{3} 
		\end{pmatrix}.  
	\end{equation}
	 (Note that $\sigma_i$ ($i=1,2$) induces an automorphism of $C_1$ associated with the Galois point $P_i$.)   
 	Moreover, we assume that $P_2 \in S \Delta_{\mathrm{in}}(C_1, C_2)$ by considering the projective transformation $\sigma_1$ or $\sigma_1^2$.  
	By Lemmas~\ref{Lemma Geometric property of SG point} and \ref{Lemma birat map extend to Proj trans}, there exist projective transformations $\phi_i$ ($i=1,2$) such that $\phi_i(C_2) = C_1$ and $\pi_{P_i} \circ \phi_i=\pi_{P_i}$. Clearly $\phi_i(P_i)=P_i$ ($i=1,2$). If $\phi_1(P_2) \ne P_2$, by replacing $\phi_1$ with another projective transformation, we assume $\phi_1(P_2) = P_2$. Indeed, if $\phi_1(P_2) \ne P_2$, then since $\phi_1(P_2) \in \Delta(C_1)$ we have $\phi_1(P_2) \in \{ P_3, P_4 \}$. Therefore, we replace $\phi_1$ with $\sigma_1^2 \circ \phi_1$ or $\sigma_1 \circ \phi_1$. For $\phi_2$, by a similar argument above, we also may assume that $\phi_2(P_1)=P_1$.  
	By Lemma~\ref{Lemma repres matrix inner SG points}, we have 
	\begin{equation} \label{Equation two inner SG points}
		\phi_1 = 	\begin{pmatrix}
						1 & 0 & 0 \\ a-1 & a & 0 \\ 0 & 0 & 1
					\end{pmatrix}, \text{ where $a \in k$ and $a^4 =1$.}
	\end{equation} 
	Hence, $\phi_1(P_3)=(-\omega: -\omega (a-1)+a:0)$. We assume $\# S \Delta_{\mathrm{in}}(C_1, C_2) \geq 3$. When $S\Delta_{\mathrm{in}}(C_1, C_2) \supset \{ P_1, P_2, P_3 \}$, we have $P_3 \in \Delta(C_2)$ hence, $\phi_1(P_3) \in \Delta(C_1)$. Because $\phi_1(P_1) = P_1$ and $\phi_1(P_2)=P_2$, we have $\phi_1(P_3) \in \{ P_3, P_4 \}$. If $\phi_1(P_3) = P_3$, then $(-\omega: -\omega (a-1)+a:0) = (-\omega : 1 : 0)$; that is, $a=1$ and $\phi_1$ is the identity transformation. However, this finding is contradictory. If $\phi_1(P_3) = P_4$, then $(-\omega: -\omega (a-1)+a:0) = (-\omega^2 : 1 : 0)$, that is, $a=-\omega$. This contradicts $a^4=1$. In the case where $S\Delta_{\mathrm{in}}(C_1, C_2) \supset \{ P_1, P_2, P_4 \}$, a similar argument leads to a contradiction. We conclude that $\# S \Delta_{\mathrm{in}}(C_1, C_2) = 2$. 

	Let us prove assertion (2). Based on the argument above, there exists a projective transformation $T$ such that $T(C_1)=C_1^{(1)}$ and $S\Delta_{\mathrm{in}}(T(C_1), T(C_2))= \{ P_1, P_2\}$. Moreover, $\phi_1$ in Eq.~(\ref{Equation two inner SG points}) satisfies $T(C_2)=\phi_1^{-1}(T(C_1))$. We conclude that $T(C_2)$ is $C_2^{(1)}$, $C_2^{(2)}$ or $C_2^{(3)}$. 
	
	Let us prove assertion (3). We assume $C_1 = C_1^{(1)}$. Let $\phi_1^{(j)}$ and $\phi_2^{(j)}$ ($j=1,2,3$) be the projective transformations represented by 
	\[ 	\begin{pmatrix}
			1 & 0 & 0 \\ \zeta_4^j-1 & \zeta_4^j & 0 \\ 0 & 0 & 1
		\end{pmatrix} \hspace{1ex} \text{and} \hspace{1ex}
		\begin{pmatrix}
			\zeta_4^{3j} & 0 & 0 \\ -\zeta_4^{3j}+1 & 1 & 0 \\ 0 & 0 & 1
		\end{pmatrix}, \hspace{1ex} \text{respectively}.
	\] 
	Assume that $C_2=C_2^{(j)}$ ($j=1,2,3$). Then, since $P_1, P_2 \in \Delta(C_1)$, $\phi_1^{(j)} (C_2) = C_1$, $\phi_1^{(j)}(P_1) = P_1$ and $\phi_1^{(j)}(P_2) = P_2$, we have $P_1, P_2 \in \Delta(C_2)$. In addition, $\pi_{P_1} \circ \phi_1^{(j)} = \pi_{P_1}$ and $\pi_{P_2} \circ \phi_2^{(j)} = \pi_{P_2}$. Thus, $P_1$ and $P_2$ are the inner SG points for $C_1 \cup C_2$. Because $\# S\Delta_{\mathrm{in}}(C_1, C_2) \leq 2$, we have $S\Delta_{\mathrm{in}}(C_1, C_2)=\{ P_1, P_2 \}$. The opposite can be deduced from the arguments presented in the proofs of assertions (1) and (2). 
\end{proof}

\begin{corollary}
	Let $C=\cup_{i=1}^n C_i \subset \mathbb{P}^2$ be a reduced curve. Assume that $n \geq 2$ and all $C_i$ are nonsingular with the same degree $d \geq 4$. If $\# S\Delta_{\mathrm{in}}(C_1, \dots, C_n) \geq 2$, then $n \leq 4$, $d=4$, $\# S\Delta_{\mathrm{in}}(C_1, \dots, C_n) = 2$, and $C$ is projectively equivalent to a curve whose components are curves $C_1^{(1)}$, $C_2^{(1)}$, $C_2^{(2)}$ or $C_2^{(3)}$. 
\end{corollary}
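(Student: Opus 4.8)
The plan is to reduce the statement about $n$ components to the two-component results already proved, using Lemma~\ref{Lemma SG points for n or 2 components} and Lemma~\ref{Lemma number of SG points}. First I would note that since $\# S\Delta_{\mathrm{in}}(C_1,\dots,C_n) \geq 2$, the chain of inclusions in Lemma~\ref{Lemma number of SG points}, namely $S\Delta_{\mathrm{in}}(C_1,C_2) \supset S\Delta_{\mathrm{in}}(C_1,\dots,C_n)$, forces $\# S\Delta_{\mathrm{in}}(C_1,C_2) \geq 2$. By Lemma~\ref{Lemma number of SG points}, if $d \geq 5$ then $\# S\Delta_{\mathrm{in}}(C_1,\dots,C_n) \leq 1$, contradicting the hypothesis; hence $d=4$. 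Applying Theorem~\ref{Theorem the number of inner SG points}(1) to the pair $C_1 \cup C_2$ gives $\# S\Delta_{\mathrm{in}}(C_1,C_2) = 2$ (it is $\geq 2$ and $\leq 2$), and therefore $\# S\Delta_{\mathrm{in}}(C_1,\dots,C_n)=2$ as well, since it is squeezed between $2$ and $\# S\Delta_{\mathrm{in}}(C_1,C_2)=2$.

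Next I would pin down the projective model. Applying Theorem~\ref{Theorem the number of inner SG points}(2) to $C_1 \cup C_2$, there is a projective transformation $T$ with $T(C_1)=C_1^{(1)}$ and $T(C_2)\in\{C_2^{(1)},C_2^{(2)},C_2^{(3)}\}$. By Lemma~\ref{Lemma the image of SG point} the SG sets are preserved under $T$, so without loss of generality I may assume $C_1=C_1^{(1)}$. The key reduction is Lemma~\ref{Lemma SG points for n or 2 components}: a point is an SG point for $C$ iff it is an SG point for every pair $C_i \cup C_j$. In particular, every $C_i \cup C_1$ ($i \geq 2$) shares the same two inner SG points, namely the two points of $S\Delta_{\mathrm{in}}(C_1,\dots,C_n)$. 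Applying Theorem~\ref{Theorem the number of inner SG points}(3) to each pair $C_1 \cup C_i$, which now has two inner SG points at $(0:1:0)$ and $(-1:1:0)$ after the normalization, I conclude that each $C_i \in \{C_2^{(1)},C_2^{(2)},C_2^{(3)}\}$.

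Finally I would bound $n$. Since $C$ is reduced, the components $C_1,\dots,C_n$ are pairwise distinct. We have $C_1 = C_1^{(1)}$ and each of $C_2,\dots,C_n$ lies in the three-element set $\{C_2^{(1)},C_2^{(2)},C_2^{(3)}\}$; these three curves are distinct from $C_1^{(1)}$ and from each other. Hence there are at most $1+3=4$ distinct components, giving $n \leq 4$. The main obstacle here is really just the bookkeeping of the normalization: one must check that the two inner SG points common to all pairs can simultaneously be placed at $(0:1:0)$ and $(-1:1:0)$ for the single fixed copy $C_1=C_1^{(1)}$, so that Theorem~\ref{Theorem the number of inner SG points}(3) applies verbatim to every pair $C_1 \cup C_i$; this is exactly what assertion (3) of that theorem is set up to provide, since it characterizes $C_2$ precisely by the condition $S\Delta_{\mathrm{in}}(C_1,C_2)=\{(0:1:0),(-1:1:0)\}$.
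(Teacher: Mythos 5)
Your argument is correct and is exactly the derivation the paper intends: the corollary is stated without proof immediately after Theorem~\ref{Theorem the number of inner SG points}, and your reduction via the inclusion chain of Lemma~\ref{Lemma number of SG points}, the pairwise criterion of Lemma~\ref{Lemma SG points for n or 2 components}, and assertions (1)--(3) of that theorem is the natural way to fill it in. The only detail you assert without checking is that $C_1^{(1)}, C_2^{(1)}, C_2^{(2)}, C_2^{(3)}$ are pairwise distinct, which is immediate from comparing the coefficients of $XY^3$ and $Z^4$ in their defining forms.
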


\begin{example}
	Let $C_i$ ($i=1, \dots, n$) be the plane curves defined by $\zeta_n^i XY^{d-1} + X^d + Z^d =0$, where $d \geq 4$ and $\zeta_n$ is a primitive $n$th root of unity. Let $C$ be the reduced curve $\cup_{i=1}^n C_i$. Then the point $P:=(0:1:0)$ is an SG point. Indeed, from Theorem~\ref{Theorem Galois points}~(9) and (10), $P$ is a Galois point for $C_n$. Since projective transformations 
	\[\sigma^i:=\begin{pmatrix}
		1 & 0 & 0 \\ 0 & \zeta_{n(d-1)}^i & 0 \\ 0 & 0 & 1
	\end{pmatrix}, \text{ where $\zeta_{n(d-1)}$ is a primitive ($n(d-1)$)st root of unity}, \]
	give $\sigma^i: C_i \rightarrow C_n$, $P$ is also a Galois point for $C_i$. Moreover, $\sigma^i$ satisfy $\pi_P \circ \sigma^i = \pi_P$. 
\end{example}

By an argument similar to the proof of Theorem~\ref{Theorem the number of inner SG points}, we can observe the following: 
	
\begin{theorem} \label{Theorem C has inner SG and outer SG}
	For a reduced curve $C= \cup_{i=1}^n C_i \subset \mathbb{P}^2$, if $C_1$ and $C_2$ are nonsingular curves with degrees $d_1 \geq 4$ and  $d_2 \geq 4$, then $\# S\Delta_{\mathrm{in}}(C_1, \dots, C_n) = 0$ or $\# S\Delta_{\mathrm{out}}(C_1, \dots, C_n) = 0$. 
\end{theorem}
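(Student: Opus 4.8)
The plan is to argue by contradiction: suppose both $S\Delta_{\mathrm{in}}(C_1,\dots,C_n)$ and $S\Delta_{\mathrm{out}}(C_1,\dots,C_n)$ are nonempty, pick $Q$ in the former and $P$ in the latter, and reduce everything to the pair $C_1\cup C_2$. By Lemma~\ref{Lemma SG points for n or 2 components} we then have $Q\in S\Delta_{\mathrm{in}}(C_1,C_2)$ and $P\in S\Delta_{\mathrm{out}}(C_1,C_2)$. Since $P$ is an outer SG point, the projection degrees of $\pi_P$ on $C_1$ and $C_2$ are $d_1$ and $d_2$, and the isomorphism $k(C_1)\simeq k(C_2)$ over $k(\mathbb{P}^1)$ (equal $k(\mathbb{P}^1)$-dimensions) forces $d_1=d_2=:d\ge 4$. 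By Lemma~\ref{Lemma set of inner/outer SG points}, $C_1$ then carries both an inner and an outer Galois point, so Theorem~\ref{Theorem Galois points}(8) shows $C_1$ (hence also $C_2$) is projectively equivalent to $D\colon XY^{d-1}+X^d+Z^d=0$. I would apply a projective transformation to arrange $C_1=D$; by Theorem~\ref{Theorem Galois points}(9),(10) the unique outer Galois point of $D$ is $P=(0:0:1)$, and the inner Galois points all lie on $Z=0$. If $d\ge 5$ then $Q=(0:1:0)$ automatically, while if $d=4$ I would normalise $Q=(0:1:0)$ using an automorphism of $D$ that fixes $(0:0:1)$ and permutes the inner Galois points (the transformations $\sigma_1,\sigma_2$ of the proof of Theorem~\ref{Theorem the number of inner SG points}, which fix $(0:0:1)$ and act transitively on the four inner Galois points).

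Next I would produce the two projective transformations governing the SG conditions. By Lemma~\ref{Lemma Geometric property of SG point} there are isomorphisms $C_2\to C_1$ compatible with $\pi_P$ and with $\pi_Q$; since $d\ge 4$, Lemma~\ref{Lemma birat map extend to Proj trans}(1) extends them to projective transformations $\Phi,\Psi$ of $\mathbb{P}^2$ with $\Phi(C_2)=\Psi(C_2)=C_1$. Because $\pi_P\circ\Phi=\pi_P$ holds on $C_2$ and $C_2$ is an irreducible curve of degree $d>2$, not contained in any conic, the equality of these two projections extends from $C_2$ to all of $\mathbb{P}^2$; the same holds for $\Psi$ and $\pi_Q$. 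In the normalised coordinates this forces $\Phi$ to fix $(0:0:1)$ and act as the identity on the $(X:Y)$-projection, and $\Psi$ to fix $(0:1:0)$ and preserve the $(X:Z)$-projection, so $\Phi=\begin{pmatrix}1&0&0\\0&1&0\\p&q&r\end{pmatrix}$ and $\Psi=\begin{pmatrix}1&0&0\\s&t&u\\0&0&1\end{pmatrix}$ with $r,t\ne 0$.

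The heart of the argument is a coefficient comparison. Substituting gives that $C_2=\Phi^{-1}(C_1)$ is cut out by $XY^{d-1}+X^d+(pX+qY+rZ)^d$ and $C_2=\Psi^{-1}(C_1)$ by $X(sX+tY+uZ)^{d-1}+X^d+Z^d$, so these polynomials are proportional. I would extract the constraints in order: the $Y^d$-coefficient gives $q=0$; the absence on the left side of mixed monomials $XY^aZ^b$ with $1\le a\le d-2$ gives $u=0$; the absence of monomials $X^{a+1}Y^{d-1-a}$ with $1\le a\le d-2$ gives $s=0$; and finally the $X^{d-1}Z$-coefficient gives $p=0$, whence $r^d=1$ and $t^{d-1}=1$. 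At this point both defining polynomials reduce to $XY^{d-1}+X^d+Z^d$, i.e.\ $C_2=C_1$, contradicting that $C_1,\dots,C_n$ are distinct irreducible components of the reduced curve $C$. This contradiction proves the theorem.

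I expect the main obstacle to be organising the coefficient bookkeeping so that the two compatibility conditions — one centred at the outer point $(0:0:1)$, one at the inner point $(0:1:0)$ — are seen to be jointly rigid enough to annihilate every off-diagonal parameter and force $C_2=C_1$. The conceptual crux, however, lies earlier: the simultaneous presence of an inner and an outer Galois point pins both components down to the single model $D$ through Theorem~\ref{Theorem Galois points}(8), after which the remainder is the computation above together with the transitivity of $\Aut(D)$ on inner Galois points needed only in the case $d=4$.
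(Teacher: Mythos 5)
Your proposal is correct and follows essentially the same route as the paper's proof: reduce to the pair $C_1\cup C_2$, use the SG condition and Theorem~\ref{Theorem Galois points}(8) to force $d_1=d_2$ and pin both components to $XY^{d-1}+X^d+Z^d=0$, normalize the inner point to $(0:1:0)$ and the outer to $(0:0:1)$ (using the automorphisms of Eq.~(\ref{Equation automorphisms}) when $d=4$), extend the two compatible isomorphisms to projective transformations via Lemmas~\ref{Lemma Geometric property of SG point} and \ref{Lemma birat map extend to Proj trans}, and conclude $C_2=C_1$ by coefficient comparison. The only cosmetic difference is that the paper first uses $\phi_P(Q)=Q$ and $\phi_Q(P)\in\{Z=0\}$ to kill two matrix entries before comparing coefficients, whereas you absorb those reductions into the comparison itself.
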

\begin{proof}
	Assume that $\# S\Delta_{\mathrm{in}}(C_1, \dots, C_n) \geq 1$ and $\# S\Delta_{\mathrm{out}}(C_1, \dots, C_n) \geq 1$. From Lemma~\ref{Lemma number of SG points}, $\# S\Delta_{\mathrm{in}}(C_1, C_2) \geq 1$ and $\# S\Delta_{\mathrm{out}}(C_1, C_2) \geq 1$. By Lemma~\ref{Lemma number of SG points} again, Definition~\ref{Def of SG point} and Theorem~\ref{Theorem Galois points}~(8), we observe that $d_1=d_2 (=:d)$, and $C_1$ and $C_2$ are projectively equivalent to the curve $XY^{d-1} + X^d + Z^d=0$. By performing  projective transformations, we assume that $C_1$ is given by $XY^{d-1} + X^d + Z^d=0$. Let $P$ and $Q$ be inner and outer SG points, respectively, for $C_1 \cup C_2$. By Theorem~\ref{Theorem Galois points}~(9) and (10), if $d \geq 5$, then $P=(0:1:0)$ and $Q=(0:0:1)$; and if $d=4$, then $P \in \{ (0:1:0), (-1:1:0), (-\omega:1:0), (-\omega^2:1:0)\}$ and $Q=(0:0:1)$, where $\omega$ is a primitive cubic root of unity. If $d=4$, then by perfoming projective transformations represented as Eq.~(\ref{Equation automorphisms}), which are automorphisms of $C_1$, we assume $P=(0:1:0)$. By Lemmas~\ref{Lemma Geometric property of SG point} and \ref{Lemma birat map extend to Proj trans}, there exist projective transformations $\phi_P$ and $\phi_Q$ such that $\phi_P(C_2)=C_1$, $\phi_Q(C_2)=C_1$, $\pi_{P} \circ \phi_P = \pi_{P}$ and $\pi_{Q} \circ \phi_Q = \pi_{Q}$. Because $Q$ is a unique outer Galois point for $C_1$ and $C_2$, we obtain $\phi_P(Q)=Q$. Since $\phi_Q(P)$ is an inner Galois point for $C_1$, $\phi_Q(P)$ is on line $Z=0$. From some elementary matrix calculations, we can observe that 
	\[	\phi_P = \begin{pmatrix}
			1 & 0 & 0 \\
			\alpha & \beta & 0 \\
			0 & 0 & 1
		\end{pmatrix} \text{ and }
		\phi_Q = \begin{pmatrix}
			1 & 0 & 0 \\
			0 & 1 & 0 \\
			\gamma & 0 & \delta
		\end{pmatrix}, 
	\]
	where $\alpha, \gamma, \in k$ and $\beta, \delta \in k \setminus \{0 \}$. Curves $\phi_P^{-1}(C_1)$ and $\phi_Q^{-1}(C_1)$ are given by 
	$F := X(\alpha X + \beta Y)^{d-1} + X^d + Z^d =0$ and $G:=XY^{d-1}+X^d+(\gamma X+ \delta Z)^d = 0$. Because $C_2=\phi_P^{-1}(C_1)=\phi_Q^{-1}(C_1)$, there exists $\lambda \in k \setminus \{ 0 \}$ such that $F = \lambda G$. By comparing both sides of each term, $\alpha = 0$, $\beta^{d-1}=1$, $\gamma=0$ and $\delta^d=1$. Thus, $C_2 = C_1$. However, this finding is contradictory.
\end{proof}


At the end of this paper, we raised some problems.

\begin{problem}
	Let $C = \cup_{i=1}^n C_i \subset \mathbb{P}^2$ be a reduced plane curve over an algebraically closed field $k$. 
	\begin{enumerate} 
		\item Assume that $\ch (k) = 0$, $n=2$ and both $C_1$ and $C_2$ are nonsingular cubic curves. Then study the number and distribution of inner SG points for $C$. 
		\item Assume that $\ch (k) = 0$, $n=2$ and $C_1$ or $C_2$ has singular points. Then determine the number and distribution of SG points for $C$. 
		\item When $\ch (k) = p > 0$, investigate the number and distribution of SG points for $C$. In particular, study for a curve whose components are projectively equivalent to a Hermitian curve. Note that one of the most famous examples of curves with many Galois points is a Hermitian curve (\cite{zbMATH05126212}).  
	\end{enumerate} 
\end{problem}


\end{document}